\title{Lower Bounds for the Shadiness Constant of Finite-Dimensional Normed Spaces}
\author{Jeremias Epperlein}
\address{University of Passau\\
  Department of Computer Science and Mathematics\\
  94032 Passau, Germany}
\email{jeremias.epperlein@uni-passau.de}
\date{\today}
\newcommand{\bbR}{\mathbb{R}}
\newcommand{\bbN}{\mathbb{N}}
\newcommand{\bbQ}{\mathbb{Q}}
\newcommand{\calC}{\mathcal{C}}
\newcommand{\calD}{\mathcal{D}}
\newcommand{\calF}{\mathcal{F}}
\newcommand{\calG}{\mathcal{G}}
\newcommand{\calI}{\mathcal{I}}
\newcommand{\calJ}{\mathcal{J}}
\newcommand{\calM}{\mathcal{M}}
\newcommand{\calN}{\mathcal{N}}
\newcommand{\eps}{\varepsilon}
\newcommand{\setsep}{\;;\;}
\newcommand{\name}[1]{\textsc{#1}}
\newcommand{\norm}[1]{\lVert#1\rVert}
\newcommand{\abs}[1]{\lvert#1\rvert}
\DeclareMathOperator{\tr}{tr}
\DeclareMathOperator{\GL}{GL}
\DeclareMathOperator{\aff}{aff}
\DeclareMathOperator{\conv}{conv}
\DeclareMathOperator{\linspan}{span}
\declaretheorem[name=Theorem, numberwithin=section]{theorem}
\declaretheorem[name=Lemma, sibling=theorem]{lemma}
\declaretheorem[name=Proposition, sibling=theorem]{proposition}
\declaretheorem[name=Corollary, sibling=theorem]{corollary}
\declaretheorem[name=Example, sibling=theorem]{example}
\newtheorem{oproblem}[theorem]{Problem}
\newtheorem{suboproblem}{Problem}
\begin{document}
\begin{abstract}
  By the Hahn-Banach theorem, every normed space admits rank-one
  projections with operator norm one.  However, this is not true for
  higher rank projections.  \name{Bosznay} and \name{Garay} showed
  that for every $d \geq 3$ there exist $d$-dimensional normed spaces $X$ for which all
  projections of rank $k$, with $2 \leq k \leq d-1$, have norm larger
  than or equal to some constant $c>1$.  We call the maximal such
  constant the shadiness constant of $X$.  Although constructing such
  spaces is not difficult, few explicit estimates of their shadiness
  constants exist.  We show how optimization techniques can provide
  provable lower bounds for these shadiness constants.
  
  As an application, we construct a $3$-dimensional normed space
  whose unit ball is a polytope with $12$ vertices,
  with shadiness constant at least $1.01$. Furthermore
  we show that there is no shady norm on $\bbR^3$
  whose unit ball is a polytope with $10$ or fewer vertices, thereby
  confirming a conjecture by 
  \name{Bosznay} and \name{Garay}.
\end{abstract}

\maketitle
\section{Introduction}
\label{sec:introduction}
Imagine that you want to order a very large decorative piece of rock
for your garden. For aesthetic reasons it should have a centrally
symmetric convex polytope.  The cheapest delivery option is a drop
from a helicopter over your yard.  While the drop will ensure that the
rock sinks exactly halfway into the ground, the direction in which it
will end up cannot be controlled.  Now assume that you are also an
avid slug breeder and your little Carpathian blue slugs need shadow
throughout the day.  You therefore need your rock to be in such a shape, that
it always casts a shadow, regardless of the direction the light is
coming from. In the first part of our paper, we will construct such a
centrally symmetric convex body and prove a lower bound on the
guaranteed size of that shadow.  In the second part, we will show that
every rock satisfying this shadiness condition must have at least 12
vertices.

In more mathematical terms, we are going to construct
a three-dimensional normed space, whose unit ball is a polytope
with 12 vertices, in which every rank-two projection
has operator norm at least $1.01$. Furthermore, we will
show that in every three-dimensional normed space, whose unit ball
is a polytope with fewer than 12 vertices, there is
a rank-two projection with operator norm one.

Investigating the norms of projections is a classical topic in the theory
of finite dimensional normed spaces, see
e.g. \cite{bohnenblustConvexRegionsProjections1938,
  grunbaumProjectionConstants1960,
  kadetsFunctionalsCompactMinkovskii1971,
  koenigFiniteDimensionalProjection1983,
  konigProjectionConstantsSymmetric1999},
for a computational approach see \cite{foucartComputationMinimalProjections2016}.
It is a direct consequence of the Hahn-Banach theorem
that in every normed space every one-dimensional subspace is the range
of a norm-one projection. This is drastically wrong for higher
dimensional subspaces. \name{Bosznay} and \name{Garay} showed in
\cite{bosznayNormsProjections1986}
that for
every dimension $d \geq 3$, there is a dense set of normed spaces, in
which no projection of rank $k \in \{2,\dots,d-1\}$ has norm one.
There are even earlier examples of such normed spaces, e.g. by \name{Singer}
in \cite[Chapter II, Theorem 1.1,
p. 217]{singerBasesBanachSpaces1970} but few explicit
examples appear in the literature.
In the spirit of the introductory tale we call
such normed spaces \emph{shady}.\footnote{Apparently there is no
  established name for such spaces. \name{Singer}'s example arises
  in the context of monotone bases. \name{Bosznay} and \name{Garay}
  only introduce the notation $N_1(X)$ for the space of such norms.
  \name{Kobos} defines constants $\rho_d$ and $\rho_d^H$ which equal
  $s(d)-1$
  and $s_{d-1}(d)-1$, respectively, in our notation, but he does not
  give these constants or the corresponding spaces a name.}
The author's original interest in such norms
arose in the study of the joint spectral radius of principal
submatrices, see \cite{epperlein2025auerbach}.

In the conclusion of their
foundational article \cite{bosznayNormsProjections1986} \name{Bosznay} and \name{Garay} write:
\enquote{In the three-dimensional real case it is not hard to
  construct a centrally symmetric convex polyhedron $K$ with twelve
  vertices for which $\norm{\cdot}_K \in N_1(X)$.  On the other hand,
  it seems plausible that there are no such polyhedra with ten
  vertices. Nevertheless, we are not able to prove it.}
In our paper we give a quantitative version of their first
assertion as well as a proof of the conjecture in the second part
of the quote.
More precisely, we show:
\begin{restatable}{theorem}{tenvertices}
  \label{thm:10-vertices-geometric}
  Let $\calC \subseteq \bbR^3$ be a centrally symmetric convex polytope with at most $10$
  vertices. Then there is a rank-two projection $P$ with
  $\norm{P}_\calC = 1$, in other words, $\calC$ is not shady.
\end{restatable}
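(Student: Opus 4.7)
The plan is to reformulate the norm-one condition geometrically and then proceed by case analysis on the number of vertices $|\mathrm{vert}(\calC)|$. A rank-two projection $P$ onto a plane $V = \ker \phi$ along a direction $w$ with $\phi(w) = 1$ satisfies $\norm{P}_\calC = 1$ if and only if $Pv = v - \phi(v) w$ lies in $\calC$ for every vertex $v$ of $\calC$ (since $\calC$ is a polytope, checking vertices suffices). So the task reduces to producing such a pair $(w, \phi)$.

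Because $\calC$ is a centrally symmetric three-dimensional polytope, its vertices come in pairs $\{v, -v\}$ and their total number is even and at least six, so $|\mathrm{vert}(\calC)| \in \{6, 8, 10\}$. The case $|\mathrm{vert}(\calC)| = 6$ is immediate: writing $\calC = \conv(\pm v_1, \pm v_2, \pm v_3)$, the projection defined by $\phi(v_1) = \phi(v_2) = 0$, $\phi(v_3) = 1$, and $w = v_3$ sends $\pm v_1, \pm v_2$ to themselves and $\pm v_3$ to $0 \in \calC$, so $\norm{P}_\calC = 1$.

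For $|\mathrm{vert}(\calC)| \in \{8, 10\}$ I would perform a combinatorial case analysis. The possible face structures are constrained by Euler's formula $V - E + F = 2$, the inequality $E \leq 3V - 6$, and the observation that no facet of a centrally symmetric polytope contains a pair $\{v, -v\}$, so each facet has at most $|\mathrm{vert}(\calC)|/2$ vertices. For every combinatorial type one exhibits an explicit pair $(w, \phi)$. A general technique is to align $w$ with a chord $v - v'$ between two vertices of $\calC$: then $Pv = Pv'$, merging two vertex constraints into one. Natural chord directions come from mirror planes of symmetry, pairs of parallel edges, and diagonals of quadrilateral facets. For instance, for the centrally symmetric pentagonal antiprism (one of the combinatorial types with ten vertices), projecting perpendicular to any of its five vertical mirror planes sends the ten vertices to only six distinct images, each a midpoint of two vertices on a common pentagonal facet, and therefore in $\calC$.

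The main obstacle I anticipate is the case $|\mathrm{vert}(\calC)| = 10$, which is dimensionally tight: five vertex-pair constraints must be satisfied with only four degrees of freedom in $(w, \phi)$, so a purely generic dimension count is insufficient. Each combinatorial type with ten vertices must be handled individually, and for those lacking a strong symmetry the chord-merging trick does not apply directly; there one has to rely on a more careful combinatorial analysis of the facet-incidence structure (or, equivalently, produce an explicit linear-programming feasibility certificate) in order to exhibit a valid $(w, \phi)$.
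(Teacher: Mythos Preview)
Your proposal is not yet a proof: you explicitly leave the $10$-vertex case as a program of case-by-case analysis (``each combinatorial type with ten vertices must be handled individually''), and you correctly note that the naive dimension count fails there. Enumerating all centrally symmetric combinatorial types with ten vertices and producing a projection for each is possible in principle, but you have not carried it out, and the chord-merging and mirror-plane tricks you describe do not obviously cover every type.

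The paper avoids this case distinction entirely by a single uniform argument. The key observation you are missing is this: triangulate the boundary of $\calC$ centrally symmetrically so that the triangulation has exactly ten vertices (add vertices on faces if $\calC$ has fewer). Euler's formula together with $2|\tilde E| = 3|\tilde F|$ forces $|\tilde E| = 24$, so some vertex $v$ has degree at least $5$. Since $v$ is not adjacent to $-v$, its neighbours lie in the four antipodal pairs $\{\pm w_1,\dots,\pm w_4\}$, and by pigeonhole both $w$ and $-w$ are adjacent to $v$ for some $w$. By central symmetry $v,w,-v,-w$ form a $4$-cycle whose edges all lie on $\partial\calC$; in particular these four points are coplanar and $\calC \cap \linspan(v,w)$ is exactly this quadrilateral. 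Now take supporting hyperplanes of $\calC$ through the edges $[v,w]$ and $[v,-w]$; the intersection of their parallel linear subspaces is a line $H$, and the projection with image $\linspan(v,w)$ and kernel $H$ collapses $\calC$ onto that quadrilateral, hence has norm one. This handles $6$, $8$, and $10$ vertices simultaneously, with no combinatorial classification needed.
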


For the quantitative aspect let us introduce for a norm $\nu$ on
$\bbR^d$ the
quantity
\begin{align}
  s_k(\nu):=\inf \{\nu(P) \setsep P \in \bbR^{d \times d} \text{ is a
  rank-$k$ projection}\}.
\end{align}
Here $\nu(P)$ refers to the operator norm of $P$ with respect to $\nu$.
We call $s_k(\nu)$ the \emph{shadiness constant of $\nu$
  in dimension $k$}.
Similarly we define the (global) shadiness constant
of $\nu$ as
\begin{align}
  s(\nu):= \min_{k \in \{2,\dots,d-1\}} s_k(\nu).
\end{align}
It follows directly from the definition that
$1 \leq s(\nu) \leq s_k(\nu)$.
We thus try to understand how large these quantities can get, i.e.
how shady a norm can be.
With a slight abuse of notation we therefore additionally define
\begin{align*}
  s(d) &:= \sup \{s(\nu) \setsep \nu \text{ is a norm on } \bbR^d\} \\
  s_k(d) &:= \sup \{s_k(\nu) \setsep \nu \text{ is a norm on } \bbR^d\}
\end{align*}
The finiteness of these quantities follows
for example from the Deregowska-Lewandowska-theorem, formerly known as Grünbaum's conjecture, see
\Cref{thm:der-lew}.
While it is not too hard to prove that a three-dimensional polytope is shady,
see \Cref{prop:charact-non-shady-cscp}, estimating $s(\nu)$ or $s(d)$ is
significantly more challenging. The best bounds
have been determined by \name{Kobos} in \cite{kobosHYPERPLANESFINITEDIMENSIONALNORMED2015,
  kobosUniformEstimateRelative2018,
  kobosUniformLowerBound2023}.
In dimension three, the best current lower bound for $s_2(3)=s(3)$ is given in
\cite{kobosUniformLowerBound2023}, where a computer assisted
construction of a norm $\nu$ on $\bbR^3$ with
$s(\nu) \geq 1+932 \times 10^{-23}$ is presented.  Notice
that the best upper bound in dimension three from
\cite{kobosHYPERPLANESFINITEDIMENSIONALNORMED2015} is
$s(3) \leq \frac{4}{3}-0.0007$.
We significantly improve on the lower bound with the following theorem.
\begin{theorem}
  \label{thm:lower-bound}
  The norm on $\bbR^3$, whose unit ball is the polyhedron $\calI$ depicted in \Cref{fig:optimal-icosahedron} with vertices
  \begin{align*}
        \{&(1,a,c),
        (1,b,c),    
        (c,1,a),
        (c,1,b),
        (a,c,1), 
        (b,c,1), \\
        &-(1,a,c),
        -(1,b,c),    
        -(c,1,a),
        -(c,1,b),
        -(a,c,1), 
        -(b,c,1)
    \}
  \end{align*}
  where $a=-\frac{3}{5}, b=-\frac{1}{5}, c=\frac{1}{10}$,
  has shadiness constant at least $1.01$.
\end{theorem}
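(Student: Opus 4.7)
The plan is to apply the paper's optimization-based verification framework to the explicit polytope $\calI$. Every rank-two projection on $\bbR^3$ can be written as $P = I - v\phi^T$, where $v \in \bbR^3 \setminus \{0\}$ spans $\ker P$ and $\phi \in (\bbR^3)^*$ with $\phi(v) = 1$ determines the range $\ker \phi$. Because both $\calI$ and its polar $\calI^*$ are polytopes, the operator norm collapses to a finite double maximum,
\begin{equation*}
\nu(P) \;=\; \max_{w \in V(\calI)} \max_{f \in V(\calI^*)} \bigl(f(w) - \phi(w)\, f(v)\bigr),
\end{equation*}
so that the theorem is equivalent to the infeasibility of the bilinear system $\phi(v)=1$ together with $f_j(w_i) - \phi(w_i)\, f_j(v) < 1.01$ for every pair $(w_i, f_j) \in V(\calI) \times V(\calI^*)$.

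A preliminary step is to enumerate $V(\calI^*)$ from the facet structure of $\calI$. The polytope $\calI$ is invariant under the cyclic coordinate permutation $(x_1,x_2,x_3)\mapsto(x_2,x_3,x_1)$ and under $x\mapsto -x$, so the same group acts on $V(\calI^*)$ and cuts the vertex lists down to a handful of orbits. This produces a manageable finite system of bilinear strict inequalities in the six unknowns $(v,\phi) \in \bbR^3 \times (\bbR^3)^*$ subject to the single bilinear normalization $\phi(v)=1$.

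The main strategy is then to partition the parameter space $\{(v,\phi) : \phi(v)=1\}$ into cells according to which pair $(w_i, f_j)$ maximizes the objective, and on each cell either to exhibit a specific witness pair that enforces the bound $\geq 1.01$ throughout the cell, or to refute local feasibility via a duality or positivity certificate supplied by the framework. The induced symmetry action on projection space restricts the analysis to one representative per orbit, keeping the casework finite. The principal obstacle is the bilinear cross-term $\phi(w_i)\, f_j(v)$ in the decision variables, which makes the raw feasibility problem non-convex; this is precisely what the paper's optimization machinery is designed to address, either via a Positivstellensatz-style certificate or a convex relaxation whose numerical output is verified rigorously (e.g.\ with interval arithmetic). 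The comparatively generous gap between the target $1.01$ and the true infimum provides the slack the certificate needs in order to close.
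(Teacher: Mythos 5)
Your reduction is sound and matches the paper's setup: parametrizing a rank-two projection by a kernel vector and a functional vanishing on the range is exactly the paper's ``kernel formulation'' ($Px = x - \frac{w^\top x}{w^\top u}u$), and the collapse of $\nu(P)$ to a finite maximum of bilinear expressions $h^\top v\, w^\top u - w^\top v\, h^\top u$ over vertex/normal pairs is how the paper sets up its optimization problems. But from there the proposal stalls at the genuinely hard part. First, the set $\{(v,\phi):\phi(v)=1\}$ is not compact (rescale $v$ by $t$ and $\phi$ by $1/t$), so Putinar's Positivstellensatz does not apply as stated; the paper has to work to compactify, either by bounding the entries of $P$ via the comparison constant $C_\calC$ of \Cref{lem:compare-to-euclidean} and adding the box constraints $(C_\calC\alpha^*)^2 - P_{ij}^2 \geq 0$, or by normalizing $u,w$ into $[-1,1]^d$ with a coordinate pinned to $\pm 1$, which forces a union of $2d^2$ basic semialgebraic sets rather than a single one. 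You would need one of these steps before any Positivstellensatz certificate can exist. Second, your proposed cell decomposition by ``which pair $(w_i,f_j)$ attains the maximum'' does not remove the obstruction: on each such cell you still must prove a strict lower bound for a bilinear function over a semialgebraic region, which is the same non-convex problem you started with; the paper never uses this decomposition.

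What the paper actually does, and what is missing from your plan, is the concrete mechanism for turning a floating-point computation into a proof. In the sum-of-squares route, the certificate is a \emph{rational} weighted sum-of-squares identity expressing $-1$ in the quadratic module plus ideal; obtaining it requires rounding the numerical Gram matrix, projecting back onto the affine space of valid Gram matrices, and absorbing the resulting indefiniteness using the identity $\Delta - [x]_r^\top[x]_r \in M_\bbQ(\calG)$ supplied by the box constraints (\Cref{thm:offset}). In the alternative linear-duality route, the paper fixes $w$ on an explicit $\eps$-dense grid so that the feasibility problem in $u$ becomes \emph{linear} and Farkas's lemma yields exact rational certificates, and then covers all remaining $w$ by a quantitative perturbation estimate $\norm{P}_\calC \geq \alpha^*/(1+\eps(C_\calC + C_\calC^2))$, after first applying a linear change of coordinates (approximate John position) to make $C_\calC$ small enough for the bound to survive. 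Neither the compactification, nor the exact-rationalization step, nor the grid-plus-perturbation argument appears in your proposal, and without at least one of them the plan cannot be completed into a proof.
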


\begin{figure}
  \begin{center}
    \includegraphics{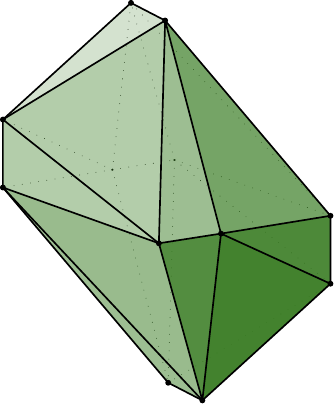}
    \caption{A centrally symmetric convex polyhedron with 12 vertices defining
      a norm on $\bbR^3$ with shadiness constant at least 1.01.}
    \label{fig:optimal-icosahedron}
  \end{center}
\end{figure}

We will describe two computer assisted proofs for this theorem, one
based on a sum-of-squares decomposition in \Cref{sec:sum-of-squares}
and another one using linear duality in \Cref{sec:eps-dense-farkas}.
In \Cref{sec:notation} we introduce our notation and discuss some preliminaries.
\Cref{sec:shadiness-constant} is devoted to the study of properties of the shadiness
constant. In \Cref{sec:calculating-norms} we formulate various optimization problems
to compute the shadiness constant of a polytopal norm. These are then
used in  \Cref{sec:sum-of-squares} and \Cref{sec:eps-dense-farkas} to
prove \Cref{thm:lower-bound}. Finally \Cref{thm:10-vertices-geometric}
is proven in \Cref{sec:non-shadiness}. 
The implementation details for our computer assisted proofs of \Cref{thm:lower-bound} are then discussed in the appendix.

\section{Notation and Preliminaries}
\label{sec:notation}
By $\bbN$ we denote the natural numbers starting at zero.
The real and rational numbers are denoted by $\bbR$ and $\bbQ$,
respectively.
We denote the positive and non-negative reals by
$\bbR_{>0}$ and $\bbR_{\geq 0}$.
Similarly, $\bbQ_{>0}$ and $\bbQ_{\geq 0}$ denote
the positive and non-negative rationals.
Given a norm $\norm{\cdot}$ on $\bbR^d$, we denote its unit ball
by $B_{\norm{\cdot}}$. To simplify notation, we use $B_2$ and
$B_\infty$
for the unit balls of, respectively, the Euclidean and the supremum norm.
The \emph{operator norm} on $\bbR^{d \times d}$ corresponding to $\norm{\cdot}$
is given by 
\begin{align*}
    \norm{A} := \sup_{\norm{x}=1} \norm{Ax}.
\end{align*}
By a projection on $\bbR^d$ we mean a linear map $P$ from $\bbR^d$
to itself satisfying $P^2=P$.

We use standard notation from convex geometry
as it can be found for example in \cite{grunbaumConvexPolytopes2003}.
For a subset $M \subseteq \bbR^d$ we denote by $\linspan(M)$
and $\aff(M)$ the \emph{linear} and \emph{affine span} of $M$,
respectively.
We denote the orthogonal complement of a vector $w \neq 0$ by
$w^\perp := \{v \in \bbR^d \setsep w^\top v = 0\}$.
A \emph{centrally symmetric convex body} is a convex subset
$\calC \subseteq \bbR^d$, which 
contains the origin in its interior and for which  $\calC=-\calC$
holds.
Such sets are precisely those that arise as the unit balls
of norms on $\bbR^d$. The norm corresponding 
to $\calC$ is given by 
\begin{align*}
    \norm{x}_\calC := \min \{ \alpha \geq 0 \setsep x \in  \alpha \calC\}.
\end{align*}
We are in particular interested in centrally symmetric convex polytopes,
i.e.  those centrally symmetric convex bodies
which are the convex hull of finitely many points.
Let $\calC$ be such a polytope. We
call an affine hyperplane $G$ a \emph{supporting
  hyperplane} of $\calC$ if $G$ intersects
the boundary of $\calC$, but not its interior.
The intersection between $G$ and $\calC$ is then
called a \emph{face}
and we call $G$ a \emph{supporting hyperplane of that face}.
The dimension of a face equals
the dimension of its affine span.
A $d-1$ dimensional face $F$ is called a \emph{facet}.
The affine span $\aff(F)$ of a facet $F$ is the unique supporting hyperplane of $F$.
Since we assume $\calC$ to be a centrally symmetric convex body, for each
supporting hyperplane $G$ of $\calC$ there is a unique vector $h_G \in \bbR^d$
such that 
\begin{align*}
  G = \{ x \in \bbR^d \setsep h_G^\top x = 1\}.
\end{align*}
$\calC$ then lies in the closed half-space $\{ x \in \bbR^d \setsep h_G^\top x \leq 1\}$.
Furthermore, if we intersect these half-spaces for all supporting
hyperplanes
of facets of $\calC$, then we get back $\calC$, i.e
\begin{align*}
  \calC = \{ x \in \bbR^d \setsep h_{\aff(F)}^\top x \leq 1, F \text{ is a
  facet of }\calC\}.
\end{align*}
This is called the \emph{H-representation} of $\calC$.
We call
$H:= \{h_{\aff(F)} \setsep F \setsep \text{ is a facet of } \calC\}$
the \emph{normals} of $\calC$.

A norm is called \emph{polytopal}, iff its unit ball is a polytope.
We will often compare polytopal norms to the Euclidean norm via the
following lemma.
\begin{lemma}
  \label{lem:compare-to-euclidean}
  Let $\calC \subseteq \bbR^d$ be a centrally symmetric convex polytope with
  vertex set $V$ and normals $H$.
  Define
  \begin{align*}
    r_\calC &:= \min\{\frac{1}{\norm{h}_2} \setsep h \in H\} \\
    R_\calC &:= \max\{\norm{v}_2 \setsep v \in V\} \\
    C_\calC &:= \frac{R_\calC}{r_\calC}
  \end{align*}
  Then $r_\calC$ is the radius of the largest Euclidean ball around
  the origin contained in
  $\calC$
  and $R_\calC$ is the radius of the smallest Euclidean ball around the
  origin containing $\calC$.
  This implies
  \[r_\calC \norm{x}_\calC \leq \norm{x}_2 \leq R_\calC \norm{x}_\calC
    \text{ for all }x
    \in \bbR^d\]
  and
  \[\frac{1}{C_\calC} \norm{A}_2 \leq \norm{A}_\calC \leq C_\calC\norm{A}_2 \text{ for all } A \in \bbR^{d \times d}.\]
\end{lemma}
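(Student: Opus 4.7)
The plan is to prove the two geometric claims (about the in-ball and the out-ball) first, and then deduce the pointwise and operator-norm inequalities mechanically from the corresponding ball inclusions.

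For the out-ball claim, I would use the V-representation $\calC = \conv(V)$. Since $R_\calC B_2$ is convex and contains every $v \in V$ by definition of $R_\calC$, it contains $\conv(V) = \calC$; conversely, any Euclidean ball around the origin containing $\calC$ must contain every vertex, so has radius at least $\max_{v \in V}\norm{v}_2 = R_\calC$. For the in-ball claim, I would use the H-representation $\calC = \bigcap_{h \in H}\{x : h^\top x \leq 1\}$. The Euclidean distance from the origin to the hyperplane $\{h^\top x = 1\}$ is $1/\norm{h}_2$, so the ball $r_\calC B_2$ sits inside every one of these half-spaces and hence inside $\calC$; conversely, any ball $\rho B_2 \subseteq \calC$ must satisfy $\rho \leq 1/\norm{h}_2$ for every $h \in H$, giving $\rho \leq r_\calC$. (Here I implicitly use that the normals $H$ precisely list the facet-defining half-spaces; this is a standard fact for centrally symmetric polytopes and has already been recorded in the preliminaries.)

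Next I would translate the two inclusions $r_\calC B_2 \subseteq \calC \subseteq R_\calC B_2$ into Minkowski-functional inequalities. For any nonzero $x$, set $\alpha = \norm{x}_2 / r_\calC$; then $x \in \alpha r_\calC B_2 \subseteq \alpha \calC$, so $\norm{x}_\calC \leq \alpha$, i.e.\ $r_\calC \norm{x}_\calC \leq \norm{x}_2$. Similarly, if $x \in \alpha\calC$ with $\alpha = \norm{x}_\calC$, then $x \in \alpha R_\calC B_2$, which gives $\norm{x}_2 \leq R_\calC \norm{x}_\calC$.

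Finally, for the operator norm inequality I would just chain these pointwise estimates: for any $A \in \bbR^{d \times d}$ and any $x \neq 0$,
\begin{align*}
  \norm{Ax}_\calC \leq \tfrac{1}{r_\calC}\norm{Ax}_2 \leq \tfrac{1}{r_\calC}\norm{A}_2 \norm{x}_2 \leq \tfrac{R_\calC}{r_\calC}\norm{A}_2 \norm{x}_\calC,
\end{align*}
yielding $\norm{A}_\calC \leq C_\calC \norm{A}_2$, and a symmetric chain starting from $\norm{Ax}_2 \leq R_\calC \norm{Ax}_\calC$ yields $\norm{A}_2 \leq C_\calC \norm{A}_\calC$. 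There is no real obstacle here; the only care needed is in applying the H-representation correctly to identify $1/\norm{h}_2$ as the distance from the origin to the facet hyperplane.
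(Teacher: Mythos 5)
Your argument is correct and is precisely the elementary Euclidean geometry the paper's one-line proof defers to: the V-representation gives the circumscribed ball, the H-representation gives the inscribed ball, and the norm and operator-norm inequalities follow mechanically from the two inclusions. Nothing is missing.
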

\begin{proof}
  This follows from elementary Euclidean geometry.
\end{proof}

For $\delta >1$
we call a centrally symmetric convex body $\calC \subseteq \bbR^d$ and
the corresponding norm $\norm{\cdot}_\calC$ $\delta$-\emph{shady in
  dimension $k$} or
 \emph{simply shady} in dimension $k$ if 
$\norm{P}_\calC \geq \delta$
for every rank-$k$ projection $P$.
With respect to the shadiness constant $s_k$ introduced in the introduction 
this is equivalent to $s_k(\nu) \geq \delta$.
Similarly we call $\calC$ and $\norm{\cdot}_\calC$
(globally) \emph{$\delta$-shady} or simply \emph{shady}
if $s(\nu)\geq \delta$.
Hence $\calC$ is globally shady if it is shady in all dimensions
from $2$ to $d-1$.
We are in particular interested in shadiness in dimension $d-1$.
Of course, for $d=3$, this is equivalent to global shadiness.

The following proposition and corollary give a simple characterization
of $d$-dimensional centrally symmetric convex polytopes which are
shady in dimension $d-1$.
A similar result appears in \cite{singerBasesBanachSpaces1970}
as Lemma 1.2.
\begin{proposition}
  \label{prop:charact-non-shady-cscp}
  Let $\calC \subseteq \bbR^{d}$ be a centrally symmetric convex polytope.
  Then $\calC$ is not shady in dimension $d-1$ iff
  there are $d-1$ dimensional subspaces
  $U$ and $W$ such that
  the relative interior of every face of $\calC$, which is intersected by $U$,
  admits a supporting hyperplane with normal in $W$.
\end{proposition}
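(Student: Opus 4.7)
The plan is to translate the projection condition into a geometric shadow condition and then pair it with the face-supporting hypothesis.

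First I would record the elementary observation that a rank-$(d-1)$ projection $P$ with image $U$ and kernel $\linspan(v)$ (with $v \neq 0$ and $v \notin U$) has operator norm one on $\calC$ iff $P(\calC) \subseteq \calC$; since $P$ is the identity on $U$ this forces $P(\calC) = \calC \cap U$, i.e.\ $\calC$ is contained in the cylinder $(\calC \cap U) + \linspan(v)$. With $W := v^\perp$, the walls of that cylinder arise from facet-defining functionals of $\calC \cap U$ in $U$ extended to $\bbR^d$ by annihilating $v$, and these extensions lie in $W$. This already identifies the canonical pair $(U, W) = (\operatorname{im} P, (\ker P)^\perp)$ that one should try in the forward direction.

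For the forward direction I would use exactly this pair. Given a proper face $F$ of $\calC$ and a point $p \in \operatorname{relint}(F) \cap U$, I would first note that $p$ lies on the relative boundary of $\calC \cap U$ inside $U$; this is a short argument using that $U \ni 0 \in \operatorname{int}(\calC)$ rules out $U$ being contained in any affine hyperplane supporting $\calC$. Taking a facet of $\calC \cap U$ through $p$ with defining functional $h' \in U^*$ and extending to $h \in (\bbR^d)^*$ by $h|_U = h'$ and $h(v) = 0$ places $h$ in $W$. For $x \in \calC$ one has $h(x) = h(Px) = h'(Px) \leq 1$ because $x - Px \in \linspan(v)$ and $Px \in \calC \cap U$; equality at the relative interior point $p \in F$ then propagates to all of $F$, giving the desired supporting hyperplane of $\calC$ along $F$.

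For the reverse direction, given $U$ and $W$, I would let $v$ span $W^\perp$ and define $P$ as the projection onto $U$ along $\linspan(v)$. The critical preliminary is $v \notin U$, which ensures $P$ is actually a rank-$(d-1)$ projection. Assuming otherwise, the hypothesis applied at a relative interior point of any facet of $\calC \cap U$ produces a supporting normal $h \in W$ of $\calC$ whose restriction to $U$ is proportional to the unique facet normal; hence every facet normal of $\calC \cap U$ would lie in $v^\perp$, contradicting the fact that the facet normals of a full-dimensional polytope containing $0$ in its interior linearly span the ambient $U$.

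With $P$ in hand, the inclusion $P(\calC) \subseteq \calC$ follows by contradiction. If $a := Px \notin \calC$ for some $x \in \calC$, the segment $[0, a] \subseteq U$ leaves $\calC$ at some $q = \lambda a$ with $\lambda \in (0, 1)$, and $q$ lies in the relative interior of some proper face $F$ that meets $U$. The hypothesis provides a supporting normal $h \in W = v^\perp$ of $\calC$ along $F$; since $h(v) = 0$ we have $h(a) = h(Px) = h(x) \leq 1$, but also $h(a) = h(q)/\lambda = 1/\lambda > 1$, a contradiction. The main obstacle throughout is really this preliminary $v \notin U$ step; its resolution via the linear-spanning property of facet normals is the technical heart of the reverse implication.
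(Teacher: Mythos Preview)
Your argument is correct and follows the same overall architecture as the paper's: the same pair $(U,W)=(\operatorname{im}P,(\ker P)^{\perp})$, the same preliminary check that the kernel direction $v$ is transverse to $U$, and the identical ray-exit contradiction for $P(\calC)\subseteq\calC$. The only tactical differences are that in the forward direction the paper obtains the supporting hyperplane of $F$ by separating the cylinder $F+\linspan(\{v\})$ from the interior of $\calC$ via the hyperplane separation theorem, rather than by extending a facet normal of the section $\calC\cap U$ as you do; and for the step $v\notin U$ the paper applies the hypothesis to the single face whose relative interior meets the line $\linspan(\{v\})$ (forcing that line into its own supporting hyperplane, hence $0\in H$), whereas you apply it across all facets of $\calC\cap U$ and invoke that their normals span $U^{*}$. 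Both variants are valid; the paper's are slightly shorter, yours make the role of the section $\calC\cap U$ more explicit.
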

\begin{proof}
  Assume first that there are subspaces $U$ and $W$
  as described above.
  Since $W$ is $(d-1)$-dimensional there is a vector $w \in \bbR^d
  \setminus \{0\}$ perpendicular to $W$.
  We first show that $w$ is not contained in $U$. Assume otherwise
  that $w \in U$.
  The boundary of $\calC$ is the disjoint union of the relative
  interior of its faces. Hence there is
  a unique pair of opposite faces of $\calC$ whose relative interior
  is intersected by $\linspan(\{w\})$.
  Let $F$ be one of these faces. By assumption there is a
  supporting hyperplane $H$ of $F$ with normal $h \in W$.
  Now both $H$ and $w$ are perpendicular to $h \in W$,
  hence $\linspan(\{w\})$ must be parallel to $H$. Since $\linspan(\{w\})$ intersects
  $F \subseteq H$, this implies that $\linspan(\{w\})$ must
  actually be contained in $H$. But this contradicts
  the fact that the origin is contained in $\linspan(\{w\})$ but not in $H$.

  Now, since $w \not\in U$, there is a unique projection $P$ with image $U$ and kernel spanned by
  $w$. Let $V$ be the set of vertices of $\calC$.
  In order to show that $\norm{P}_{\calC}=1$, we have
  to show that $P(V) \subseteq \calC$,
  since then also $P(\calC)=P(\conv(V)) \subseteq \calC$.
  Assume otherwise that there exists a vertex $v \in V$
  with $Pv \not \in \calC$.
  There is $\alpha \in \bbR$ such that $v = Pv + \alpha w$.
  Let $F$ be the unique face of $\calC$ whose relative interior is
  intersected by the ray $\{\beta Pv \setsep \beta \geq 0\}
  \subseteq U$.
  Since $U$ intersects the relative interior of $F$,
  there is a vector $h \in W$ perpendicular to $F$.
  We may normalize $h$ such that $\{x \in \bbR^d \setsep h^\top x = 1\}$
  is a supporting hyperplane of $F$.
  Since $Pv \not \in \calC$, the open line segment from the origin to $Pv$ intersects
  $F$ and we have
  $h^\top Pv >1$.
  On the other hand,
  $h^\top Pv = h^\top v - \alpha h^\top w = h^\top v \leq 1$,
  contradiction.
  We thus have shown that $\norm{P}_\calC=1$ and hence that $\calC$ is
  not shady in dimension $d-1$.

  On the other hand assume that $\calC$ is not shady
  and let $P$ be a projection of rank $d-1$ and
  norm one. Let $w$ be a vector spanning the kernel of $P$
  and let $W=w^\perp$.
  Let $U$ be the range of $P$.
  Both $U$ and $W$ are $(d-1)$-dimensional linear subspaces of
  $\bbR^d$.
  Let $F$ be a face of $\calC$ whose interior intersects $U$.
  Since $P$ has norm one, the interior of $\calC$
  must be projected to the interior of $\calC$ under $P$.
  Hence the convex subset $M:=F+ \linspan(\{w\})$, which is projected to $F \cap U$ and thus
  to the boundary of $\calC$, must be disjoint from the interior of
  $\calC$.
  By the hyperplane separation
  theorem there is a hyperplane $H$
  separating the interior of $\calC$ from $M$.
  $H$ must be parallel to $\linspan(\{w\})$
  and it must contain $F$. Therefore any vector perpendicular to $H$
  is also perpendicular to $F$ and thus contained in  $w^\perp = W$. 
\end{proof}  

Based on this result we can give a simple sufficient criterion for
a centrally symmetric convex polytope to be shady in dimension $d-1$.

\begin{corollary}
  \label{cor:shadiness-in-dim-d}
  Let $\calC \subseteq \bbR^d$ be a centrally symmetric convex polytope.
  Let $H$ be a set of normals of supporting hyperplanes
  of $\calC$, one for each pair of opposite facets.
  If the vectors in $H$ are in general position, i.e.
  every $d$ tuple in $H$ is linearly independent,
  and every $(d-1)$-dimensional linear subspace of $\;\bbR^d$ intersects
  the relative interior of at least $d$ pairs of opposite facets of $\calC$, then
  $\calC$ is shady
  in dimension $d-1$.
\end{corollary}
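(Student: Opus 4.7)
The plan is to prove the contrapositive using \Cref{prop:charact-non-shady-cscp}. Assume that $\calC$ is \emph{not} shady in dimension $d-1$. Then the proposition supplies two $(d-1)$-dimensional subspaces $U, W \subseteq \bbR^d$ such that the relative interior of every face of $\calC$ intersected by $U$ admits a supporting hyperplane whose normal vector lies in $W$. The goal is to show that these two subspaces are incompatible with the general position and covering assumptions on $H$.

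By the second hypothesis of the corollary, $U$ intersects the relative interior of at least $d$ distinct pairs of opposite facets of $\calC$; call these facets $F_1, \dots, F_d$ (one from each pair). For each facet $F_i$, its affine span $\aff(F_i)$ is the \emph{unique} supporting hyperplane of $F_i$, since $\dim F_i = d-1$ forces any affine hyperplane containing $F_i$ to coincide with $\aff(F_i)$. Consequently the normal vector to any supporting hyperplane of $F_i$ is a scalar multiple of the chosen $h_{F_i} \in H$. The conclusion of the proposition therefore forces $h_{F_i} \in W$ for each $i = 1, \dots, d$ (using that $W$ is a linear subspace, so closed under negation).

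We have thus produced $d$ elements of $H$, one from each of $d$ distinct opposite-facet pairs, all contained in the $(d-1)$-dimensional subspace $W$. But this $d$-tuple in $H$ is then linearly dependent, directly contradicting the general position hypothesis. Hence the assumption that $\calC$ is not shady in dimension $d-1$ cannot hold, which proves the corollary.

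The only delicate ingredient is the uniqueness of the supporting hyperplane of a facet, which promotes the weaker statement \enquote{some normal of a supporting hyperplane lies in $W$} to the sharper statement \enquote{the distinguished normal $h_{F_i} \in H$ itself lies in $W$}; without this step, one could not invoke general position of $H$ to derive a contradiction. Everything else is bookkeeping between the hypotheses and the characterization in \Cref{prop:charact-non-shady-cscp}.
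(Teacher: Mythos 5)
Your proof is correct and follows essentially the same route as the paper's: apply \Cref{prop:charact-non-shady-cscp} to get $U$ and $W$, use the covering hypothesis to find $d$ pairs of opposite facets met by $U$, observe that the corresponding normals in $H$ must all lie in $W$, and contradict general position. The extra care you take with the uniqueness of a facet's supporting hyperplane (and with $W$ being closed under scaling) is a point the paper's proof passes over silently, but it is the same argument.
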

\begin{proof}
  Assume that $\calC$ is not shady in dimension $d-1$. Let $U$
  and $W$ be the linear subspaces
  from \Cref{prop:charact-non-shady-cscp}.
  By assumption $U$ intersects the relative interior of $d$ pairs of
  opposite facets.
  For each of these pairs of facets there is a unique normal in $H$ and
  hence these normals are linearly independent by assumption.
  But they are also all contained in the $d-1$ dimensional subspace $W$, contradiction.
\end{proof}

\begin{example}
  \label{ex:simple-shadiness-test}
  In order to apply the corollary to the polyhedron $\calI$ from
  \Cref{thm:lower-bound},
  let $H$, for example, be the set of normals to facets to $\calI$
  whose first coordinate is positive.
  A straightforward but somewhat lengthy check shows that $H$ is in general position.
  Similarly one can show that every two-dimensional linear subspace
  intersects the relative interior of at least two pairs of opposite
  facets of $\calI$, see \Cref{sec:impl-general}.
  This already shows that $\calI$ is shady, but
  does not give any lower bound for the shadiness constant $s_2(\calI)$.
\end{example}

In \Cref{sec:sum-of-squares} we will use some techniques from real
algebraic geometry, where we will use the following notation, most of
which can be found e.g. in \cite{theobaldRealAlgebraicGeometry2024}.  Let us
consider variables $x = (x_1,\dots,x_n)$.  Let
$\bbR[x] = \bbR[x_1,\dots,x_n]$ be the ring of real polynomials in the commuting
scalar variables $x_1,\dots,x_n$.  For a multi-index $\alpha \in \bbN^n$ we
denote by $x^\alpha$ the monomial
$x_1^{\alpha_1} \cdots x_n^{\alpha_n}$.
Let $\abs{\alpha}=\sum_{i=1}^n \alpha_i$ be the \emph{weight} of
$\alpha$.
We denote the set of multi-indices of
weight at most $r$ by
\begin{align*}
  \bbN^n_{\leq r} = \{ \alpha \in \bbN^n \setsep \abs{\alpha}\leq r\}.
\end{align*}
It is well-known that $\abs{\bbN_{\leq r}^n} = \binom{n+r}{n}$.
By
\begin{align*}
    \bbR[x_1,\dots,x_n]^2:=\{\sum_{i=1}^k s_i^2 \setsep k \in \bbN, s_i \in \bbR[x_1,\dots,x_n] \text{ for } i \in \{1,\dots,k\}\}
\end{align*}
we denote the cone of \emph{sum-of-square} polynomials.
We will also need rational sum-of-square polynomials. Here we allow
additional positive rational coefficients\footnote{
  In the real case, we can incorporate these coefficients into the
  squared polynomials. This is no longer possible in the
  rational case.} and define
\begin{align*}
  \bbQ[x_1,\dots,x_n]^2 := \{\sum_{i=1}^k \gamma_i s_i^2 \setsep k \in
  \bbN,\, s_i \in \bbQ[x_1,\dots,x_n],\, \gamma_i \in \bbQ_{>0} \text{ for } i \in \{1,\dots,k\}\}.
\end{align*}

For a family of polynomials $\calF=\{f_1,\dots,f_\ell\} \subseteq \bbR[x]$
let
\begin{align*}
    I(\calF):= \{ \sum_{i=1}^\ell p_i f_i \setsep p_i
  \in \bbR[x] \text{ for } i \in \{1,\dots,\ell\}\}
\end{align*}
be the \emph{ideal} generated by $\calF$ and let 
\begin{align*}
    M(\calF):= \{q_0 + \sum_{i=1}^\ell q_i f_i \setsep q_i \in
  \bbR[x]^2 \text{ for }i \in \{0,\dots,\ell\}\}
\end{align*}
be the \emph{quadratic module} generated by $\calF$.
We also define rational versions of these structures:
\begin{align*}
    I_\bbQ(\calF)&:= \{ \sum_{i=1}^\ell p_i f_i \setsep p_i \in
                   \bbQ[x] \text{ for } i \in \{1,\dots,\ell\}\} \\
  M_\bbQ(\calF)&:= \{q_0 + \sum_{i=1}^\ell q_i f_i \setsep q_i \in
                 \bbQ[x]^2 \text{ for } i \in \{0,\dots,\ell\}\}
\end{align*}

A quadratic module $M$ is \emph{Archimedean} if
there is $N \in \bbN$ (or equivalently $N \in \bbR_{>0}$) such
that\[N-\sum_{i=1}^n x_i^2 \in M.\]

A set of the form
\[\{x \in \bbR^n \setsep f(x) \geq 0 \text{ for all } f \in \calF\}\]
is called a \emph{basic semialgebraic set}.

We finish this section with two short and easy results which we will
need in \Cref{sec:sum-of-squares}.
Both are certainly well-known but we could not find them in this
precise form in the literature.
\begin{lemma}
\label{lem:decompose-1-x}
Let $x=(x_1,\dots,x_n)$ be variables and let $\alpha \in \bbN^n$.
Then there are polynomials $\tilde{p}_1,\dots,\tilde{p}_n$
in $\bbR[x]$ with non-negative integer coefficients
such that 
\begin{align*}
    1-x^\alpha = \sum_{i=1}^n (1-x_i) \tilde{p}_i(x_1,\dots,x_n). 
\end{align*}
\end{lemma}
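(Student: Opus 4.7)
The plan is to reduce the claim to the single-variable factorization $1 - y^k = (1-y)\sum_{j=0}^{k-1} y^j$ by means of a telescoping sum. The core observation is that the partial products $P_i := \prod_{j \leq i} x_j^{\alpha_j}$ satisfy $P_0 = 1$, $P_n = x^\alpha$, and $P_{i-1} - P_i = P_{i-1}(1 - x_i^{\alpha_i})$, so summing over $i$ collapses to $1 - x^\alpha$.

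Concretely, I would first write
\begin{align*}
  1 - x^\alpha = \sum_{i=1}^n \Bigl(\prod_{j < i} x_j^{\alpha_j}\Bigr)\bigl(1 - x_i^{\alpha_i}\bigr).
\end{align*}
Next, for each $i$ with $\alpha_i \geq 1$ I would apply the one-variable identity $1 - x_i^{\alpha_i} = (1 - x_i)\sum_{k=0}^{\alpha_i - 1} x_i^k$, and for $i$ with $\alpha_i = 0$ the corresponding summand vanishes (so I simply set $\tilde{p}_i := 0$ in that case). Substituting yields the desired decomposition with
\begin{align*}
  \tilde{p}_i(x) := \Bigl(\prod_{j < i} x_j^{\alpha_j}\Bigr) \cdot \sum_{k=0}^{\alpha_i - 1} x_i^k,
\end{align*}
which is a product and sum of monomials with coefficient $1$, hence has non-negative integer coefficients.

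There is no real obstacle here: the argument is a bookkeeping exercise, and the only mild subtlety is the convention for $\alpha_i = 0$ (where the empty sum $\sum_{k=0}^{-1} x_i^k$ must be read as $0$ rather than $1$) so that the telescoping identity still holds termwise. Alternatively, one could proceed by induction on $n$, writing $1 - x^\alpha = (1 - x_n^{\alpha_n}) + x_n^{\alpha_n}(1 - x_1^{\alpha_1}\cdots x_{n-1}^{\alpha_{n-1}})$ and invoking the one-variable factorization together with the inductive hypothesis; but the direct telescoping construction above is cleaner and gives explicit formulas for the $\tilde{p}_i$.
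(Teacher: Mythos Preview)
Your proof is correct. The paper proves the lemma by induction on the weight $\abs{\alpha}$, writing $1-x^\alpha = (1-x_1) + x_1(1-x^\beta)$ with $\beta = \alpha - e_1$ and recursing; if one unwinds that recursion, one obtains exactly your polynomials $\tilde{p}_i = \bigl(\prod_{j<i} x_j^{\alpha_j}\bigr)\sum_{k=0}^{\alpha_i-1} x_i^k$. So the two arguments produce the same decomposition, but your direct telescoping over the variable index, combined with the one-variable geometric-series identity, is more explicit: it delivers closed-form formulas for the $\tilde{p}_i$ immediately, whereas the paper's inductive argument only certifies their existence. The alternative you sketch at the end (induction on $n$) is closer in spirit to the paper's proof, though the paper inducts on $\abs{\alpha}$ rather than on $n$.
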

\begin{proof}
    We prove this by induction on the weight of $\alpha$.
    If the weight of $\alpha$ is one or zero, the theorem follows immediately.
    Otherwise assume without loss of generality that $\alpha_1 \neq 0$.
    Set $\beta:=\alpha - (1,0,\dots,0)$.
    Then
    \begin{align*}
        1-x^\alpha = (1-x_1)+x_1 \cdot (1-x^\beta).
    \end{align*}
    Applying the induction hypothesis to $(1-x^\beta)$  we obtain our result.
\end{proof}

\begin{proposition}
  \label{thm:offset}
  Let $x=(x_1,\dots,x_n)$ be variables, let $\Omega \in \bbQ_{>0}$ and
  let $\alpha \in \bbN^n$.
  Set $\Delta := \Omega^{2\abs{\alpha}}$.
    There are rational sum-of-square polynomials $p_1,\dots,p_n \in \bbQ[x]^2$ such that
    \[\Delta-x^{2\alpha} = \sum_{i=1}^n p_i \cdot (\Omega^2 - x_i^2).\]
\end{proposition}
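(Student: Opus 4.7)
The plan is to reduce the statement to \Cref{lem:decompose-1-x} via the substitution $y_i = x_i^2/\Omega^2$. The key insight is that this substitution is precisely what converts the cone of polynomials with non-negative (rational) coefficients into the rational sum-of-squares cone $\bbQ[x]^2$: under $y_i \mapsto x_i^2$ every monomial $y^\beta$ becomes the perfect square $(x^\beta)^2$. The auxiliary algebraic identity
\[
  \Omega^{2\abs{\alpha}}\bigl(1 - x_i^2/\Omega^2\bigr) \;=\; \Omega^{2(\abs{\alpha}-1)}\,(\Omega^2 - x_i^2)
\]
explains the correct scaling factor.

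Concretely, I would first apply \Cref{lem:decompose-1-x} in auxiliary variables $y=(y_1,\dots,y_n)$ to obtain polynomials $\tilde p_1,\dots,\tilde p_n \in \bbR[y]$ with non-negative integer coefficients satisfying
\[
  1 - y^\alpha \;=\; \sum_{i=1}^n (1-y_i)\,\tilde p_i(y_1,\dots,y_n).
\]
Next I would substitute $y_i := x_i^2/\Omega^2$ into this identity and multiply through by $\Delta = \Omega^{2\abs{\alpha}}$. The left-hand side becomes $\Delta - x^{2\alpha}$, and after distributing one factor of $\Omega^2$ into each summand as above, the natural candidate is
\[
  p_i(x) \;:=\; \Omega^{2(\abs{\alpha}-1)}\,\tilde p_i\!\left(\tfrac{x_1^2}{\Omega^2},\dots,\tfrac{x_n^2}{\Omega^2}\right).
\]

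The remaining task is to verify that each $p_i$ lies in $\bbQ[x]^2$. Writing $\tilde p_i = \sum_\beta c_\beta\,y^\beta$ with $c_\beta \in \bbN$, every term of $p_i$ has the form $c_\beta\,\Omega^{2(\abs{\alpha}-1-\abs{\beta})}\,(x^\beta)^2$, which is a non-negative rational multiple of the square of a rational monomial (note $\Omega^{2k} \in \bbQ_{>0}$ for every $k \in \bbZ$ since $\Omega \in \bbQ_{>0}$). Because $\bbQ[x]^2$ is closed under addition, this proves $p_i \in \bbQ[x]^2$. The degenerate case $\abs{\alpha}=0$ is handled trivially by taking all $p_i = 0$.

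I do not anticipate any real obstacle: the statement is exactly the homogeneous-by-scaling, squared version of \Cref{lem:decompose-1-x}, and the entire proof hinges on the observation that $y \mapsto x^2$ converts a non-negative-coefficient decomposition into a sum-of-squares decomposition with no further work.
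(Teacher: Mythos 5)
Your proof is correct and follows essentially the same route as the paper: apply \Cref{lem:decompose-1-x}, substitute $y_i = x_i^2/\Omega^2$, multiply by $\Omega^{2\abs{\alpha}}$, and take $p_i(x)=\Omega^{2(\abs{\alpha}-1)}\tilde p_i(x_1^2/\Omega^2,\dots,x_n^2/\Omega^2)$. You actually spell out the one step the paper leaves implicit, namely why each $p_i$ lands in $\bbQ[x]^2$.
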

\begin{proof}
  Consider the polynomials $\tilde{p}_1, \dots,\tilde{p}_n$
  from \Cref{lem:decompose-1-x} applied to $\alpha$.
    Replace the variables by $(x_1^2/\Omega^2,\dots,x_n^2/\Omega^2)$.
    After multiplying both sides by $\Omega^{2\abs{\alpha}}$
    we get
    \begin{align*}
        \Omega^{2\abs{\alpha}} - x^{2\alpha} &= \sum_{i=1}^n \Omega^{2(\abs{\alpha}-1)} \tilde{p}_i(\frac{x_1^2}{\Omega^2},\dots,\frac{x_n^2}{\Omega^2})(\Omega^2-x_i^2) 
    \end{align*}
    We can now finish the proof by setting
    \begin{align*}
    p_i(x_1,\dots,x_n)&:=\Omega^{2(\abs{\alpha}-1)}
            \tilde{p}_i(\frac{x_1^2}{\Omega^2},\dots,\frac{x_n^2}{\Omega^2}). \qedhere
    \end{align*}
  \end{proof}
  
\section{The Shadiness Constant}
\label{sec:shadiness-constant}
In this section we collect some useful facts on the shadiness
constants
\[s_k(\nu)=\inf \{\nu(P) \setsep P \in \bbR^{d \times d} \text{ is a
    rank-$k$ projection}\}\] defined in the introduction.
To simplify notation we will also write
$s_k(\calC)=s_k(\norm{\cdot}_\calC$
whenever $\calC$ is a centrally symmetric convex body.
The \emph{relative projection constant} is a related
quantity which received considerable attention in
Banach space geometry, see for example
\cite{grunbaumProjectionConstants1960},
\cite{koenigFiniteDimensionalProjection1983} and
\cite{kobosUniformEstimateRelative2018}.
For a Banach space $X$ and a subspace $Y$
it is defined as
\begin{align*}
  \lambda(Y,X) = \inf\{\norm{P} \setsep P \text{ is a projection from $X$ to $Y$}\}.
\end{align*}
The shadiness constant in dimension $k$ is the minimum
of the relative projection constants over all $k$-dimensional
subspaces, i.e.
\begin{align*}
  s_k(\calC) = \min \{\lambda(Y, (\bbR^d, \norm{.}_\calC)) \setsep Y
  \leq \bbR^d, \dim Y= k\}. 
\end{align*}

A useful simple result about the shadiness constant is the fact that the shadiness constant does
not change under similarity transformations.
\begin{lemma}
    \label{lem:transformation-shadiness}
    Let $T \in \GL_d(\bbR)$ and let $\calC \subseteq \bbR^d$ be  a
    centrally symmetric convex body.
    Then $\norm{x}_{T\calC} = \norm{T^{-1}x}_{\calC}$ for $x \in \bbR^d$
    and $\norm{A}_{T\calC} = \norm{T^{-1}AT}_{\calC}$ for $A \in
    \bbR^{d \times d}$.
    For all $k \in \{2,\dots,d-1\}$ we have $s_k(\calC)=s_k(T\calC)$.
\end{lemma}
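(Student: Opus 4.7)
The proof is essentially a direct chain of definition-chasing, and no step requires more than elementary manipulations. I would break it into three short pieces matching the three claims.

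First, I would establish the identity $\norm{x}_{T\calC}=\norm{T^{-1}x}_\calC$ directly from the Minkowski-functional definition. Since $T$ is linear and invertible, for any $\alpha\geq 0$ we have $x\in\alpha T\calC$ if and only if $T^{-1}x\in\alpha\calC$, so the infima defining the two norms agree. Note that $T\calC$ is again a centrally symmetric convex body because $T$ is a linear isomorphism, so the Minkowski functional is indeed a norm.

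Next, I would use this identity to prove the operator-norm identity. By definition,
\begin{align*}
  \norm{A}_{T\calC} = \sup_{x\neq 0} \frac{\norm{Ax}_{T\calC}}{\norm{x}_{T\calC}} = \sup_{x\neq 0} \frac{\norm{T^{-1}Ax}_\calC}{\norm{T^{-1}x}_\calC}.
\end{align*}
Substituting $y=T^{-1}x$, which is a bijection on $\bbR^d\setminus\{0\}$, turns the right-hand side into $\sup_{y\neq 0}\norm{T^{-1}ATy}_\calC/\norm{y}_\calC = \norm{T^{-1}AT}_\calC$.

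Finally, I would observe that conjugation by $T$ is a bijection on the set of rank-$k$ projections: if $P^2=P$ and $\operatorname{rank}P=k$, then $(T^{-1}PT)^2=T^{-1}P^2T=T^{-1}PT$ is again idempotent with the same rank (its image is $T^{-1}(\operatorname{im}P)$, which is $k$-dimensional), and the map $P\mapsto T^{-1}PT$ has inverse $Q\mapsto TQT^{-1}$. Combining this bijection with the operator-norm identity gives
\begin{align*}
  s_k(T\calC) &= \inf\{\norm{P}_{T\calC}\setsep P\text{ is a rank-$k$ projection}\} \\
              &= \inf\{\norm{T^{-1}PT}_\calC\setsep P\text{ is a rank-$k$ projection}\} \\
              &= \inf\{\norm{Q}_\calC\setsep Q\text{ is a rank-$k$ projection}\} = s_k(\calC),
\end{align*}
which completes the proof. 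There is no real obstacle here; the only thing to be careful about is verifying that $T\calC$ is again a centrally symmetric convex body (which is immediate from $T\calC=-T\calC$ and the fact that $T$ maps interiors to interiors) and that the conjugation map genuinely preserves rank and idempotency.
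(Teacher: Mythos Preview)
Your proof is correct and follows essentially the same approach as the paper: both use the Minkowski-functional definition to get the vector-norm identity, substitute to obtain the operator-norm identity, and then use that conjugation by $T$ preserves rank-$k$ projections. Your version is slightly more explicit (spelling out that conjugation is a bijection and that $T\calC$ is again a centrally symmetric convex body), but there is no substantive difference.
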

\begin{proof}
    Notice that 
    $\norm{x}_{T\calC} = \min \{\alpha \geq 0 \setsep x \in \alpha T\calC\}
    = \min\{\alpha \geq 0 \setsep T^{-1}x \in \alpha\calC\} = \norm{T^{-1}x}_\calC$.
    Furthermore, $\norm{A}_{T\calC} = \sup_{x \neq 0} \frac{\norm{T^{-1}ATT^{-1}x}_\calC}
    {\norm{T^{-1}x}_\calC} = \norm{T^{-1}AT}_\calC$.
    Now assume that $\calC$ is $\delta$-shady in dimension $k$.
    Let $P$ be a rank-$k$ projection. We have to show that
    $\norm{P}_{T\calC} \geq \delta$.
    This follows directly from $\norm{P}_{T\calC} = \norm{T^{-1}PT}_{\calC}$
    and the fact that $T^{-1}PT$ is also a rank-$k$ projection.
  \end{proof}

  By definition $s_k$ is lower bounded by one. A good upper bound can
  be obtained from the recent proof of the Grünbaum conjecture by
  \name{Deregowska} and \name{Lewandowska}, 
  \cite{deregowskaSimpleProofGrunbaum2023}. 
  \begin{theorem}[Deregowska, Lewandowska 2023]
    \label{thm:der-lew}
    For a norm $\nu$ on $\bbR^d$ we have 
    \[s_k(\nu) \leq \frac{2}{k+1}(1+\frac{k-1}{2}\sqrt{k+2}) \leq
      \sqrt{k}, \quad k \in \{2,\dots,d-1\}.\]
  \end{theorem}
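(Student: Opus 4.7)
Since this is attributed to \name{Deregowska} and \name{Lewandowska}, the plan is to outline their strategy. The useful starting reduction is the identification
\[s_k(\nu) = \min\{\lambda(Y, (\bbR^d, \nu)) \setsep Y \leq \bbR^d,\ \dim Y = k\}\]
recorded earlier in this section. It implies that one does not have to control \emph{every} rank-$k$ projection of $\bbR^d$; it suffices to exhibit one $k$-dimensional subspace $Y$ together with a single projection $P:\bbR^d \to Y$ whose operator norm is at most $\frac{2}{k+1}\bigl(1+\frac{k-1}{2}\sqrt{k+2}\bigr)$. By \Cref{lem:transformation-shadiness} we may further normalize the unit ball $\calC$ of $\nu$ by any invertible linear transformation, for instance putting it in John's position so that the maximum-volume ellipsoid inside $\calC$ is the Euclidean unit ball $B_2$.

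In this normalization the symmetric John theorem produces contact points $u_1,\dots,u_m$ on $\partial\calC \cap \partial B_2$ together with positive weights $\mu_i$ satisfying $\sum_i \mu_i u_i u_i^\top = I_d$. One can then build a candidate subspace $Y$ and projection $P$ out of a carefully chosen subfamily of these contact points, together with a combinatorial choice of coefficients; the natural template is modelled on the extremal example $\ell_\infty^{k+1}$ projecting onto its diagonal hyperplane $\{x : \sum_i x_i = 0\}$, which realizes equality in the claimed bound, as \name{Grünbaum} originally observed.

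The main obstacle is the sharp estimate for $\nu(P)$. A crude Cauchy-Schwarz argument already yields the weaker Kadec-Snobar-type bound $\sqrt{k}$, but matching the precise constant $\frac{2}{k+1}\bigl(1+\frac{k-1}{2}\sqrt{k+2}\bigr)$ requires a more delicate estimate that exactly reproduces the $\ell_\infty^{k+1}$ extremal case; this is the technical core of the argument and the step one cannot shortcut.

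Finally, the auxiliary inequality $\frac{2}{k+1}\bigl(1+\frac{k-1}{2}\sqrt{k+2}\bigr) \leq \sqrt{k}$ is a routine check: rearranging it as $(k-1)\sqrt{k+2} + 2 \leq (k+1)\sqrt{k}$ and squaring twice reduces it to $(k-1)^2 \geq 0$, which holds trivially for all $k \geq 2$.
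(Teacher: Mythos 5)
This theorem is stated in the paper without proof: it is imported verbatim from \cite{deregowskaSimpleProofGrunbaum2023} (the Deregowska--Lewandowska proof of Grünbaum's conjecture), so there is no in-paper argument to compare yours against. Judged as a standalone proof, your proposal has a genuine gap, and you acknowledge it yourself: the entire quantitative content of the theorem --- the estimate $\nu(P)\leq \frac{2}{k+1}\bigl(1+\frac{k-1}{2}\sqrt{k+2}\bigr)$ for a suitably constructed rank-$k$ projection --- is deferred as ``the step one cannot shortcut.'' A proof sketch that names John's position, contact points, and the extremal role of $\ell_\infty^{k+1}$ identifies the right circle of ideas, but it does not establish the bound; in particular nothing in your outline explains how the chosen subfamily of contact points yields a projection, nor how its norm is controlled. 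If the intent is to cite the result, say so and cite it; if the intent is to prove it, the core estimate must actually be carried out.

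Two smaller points. First, your opening reduction is correct and worth making explicit: since $s_k(\nu)=\min_Y \lambda(Y,(\bbR^d,\nu))$ and the relative projection constant of any $k$-dimensional subspace is bounded by its absolute projection constant, the claimed bound on $s_k$ follows from the bound on the maximal projection constant of $k$-dimensional spaces --- that is the precise sense in which one projection onto one subspace suffices. Second, your verification of the auxiliary inequality $\frac{2}{k+1}\bigl(1+\frac{k-1}{2}\sqrt{k+2}\bigr)\leq\sqrt{k}$ is essentially right but the bookkeeping is off: after rearranging to $2+(k-1)\sqrt{k+2}\leq(k+1)\sqrt{k}$, the first squaring leaves $2(k+3)(k-1)\geq 4(k-1)\sqrt{k+2}$, and the second squaring reduces (for $k\geq 2$) to $(k+3)^2\geq 4(k+2)$, i.e. $(k+1)^2\geq 0$, not $(k-1)^2\geq 0$. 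The conclusion is unaffected, but the discrepancy suggests the computation was not actually performed; one should also record that both sides are nonnegative before each squaring.
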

  
  Next we show that the shadiness constant depends continuously on
  the norm. To make sense of this, we have to equip the space of norms
  with a topology. A common choice is to use the Banach-Mazur
  distance, see e.g. \cite{tomczak-jaegermannBanachMazurDistancesFinitedimensional1989},
  which is defined for two norms $\nu_1,\nu_2$ on $\bbR^d$ as
   \begin{align*}
     d(\nu_1,\nu_2) := \min_T \norm{T}_{\nu_1,\nu_2} \norm{T^{-1}}_{\nu_2,\nu_1},
  \end{align*}
   where the minimum is taken over all $T \in \GL_d(\bbR)$.
   Here $\norm{T}_{\nu_1,\nu_2}$ denotes
   the operator norm of $T$ considered as a map from $(\bbR^d,\nu_1)$
   to $(\bbR^d,\nu_2)$, i.e.
   $\norm{T}_{\nu_1,\nu_2} = \sup_{x \neq 0}
   \frac{\nu_2(Tx)}{\nu_1(x)}$.
   The map $\delta = \log d$ is almost a metric, but it is not definite, since linearly isometric spaces have
   distance zero. If we quotient by linear isometries, we obtain
   a compact metric space called the \emph{Banach-Mazur compactum} or
   \emph{Minkowski compactum}.
  \begin{theorem}
    Let $k \in \{2,\dots,d-1\}$.
    Then $\nu \mapsto \log s_k(\nu)$ is a $1$-Lipschitz map from the Banach-Mazur compactum
    to $\bbR$. In particular, $s_k$ depends continuously on the norm.
    Furthermore, there is a norm $\tilde{\nu}$ on $\bbR^d$ which
    minimizes $s_k$ over all norms on $\bbR^d$.
  \end{theorem}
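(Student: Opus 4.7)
The key point is that the two ingredients have already been essentially set up earlier: \Cref{lem:transformation-shadiness} tells us that $s_k$ is invariant under the linear $\GL_d(\bbR)$-action $\nu \mapsto \nu \circ T^{-1}$, so $\log s_k$ descends to a well-defined function on the Banach-Mazur compactum. The task is therefore only to turn a suitable comparison of two norms into a comparison of their shadiness constants.

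The basic comparison lemma is the following: if $\mu,\nu$ are norms on $\bbR^d$ with $\tfrac{1}{c}\nu \leq \mu \leq C\nu$ pointwise, then for any linear map $A \in \bbR^{d\times d}$ one has $\mu(A) \leq Cc\,\nu(A)$, by the one-line estimate $\mu(Ax)/\mu(x)\leq (Cc)\,\nu(Ax)/\nu(x)$. Restricting to rank-$k$ projections and taking the infimum yields $s_k(\mu) \leq Cc\,s_k(\nu)$. I would then translate the Banach-Mazur distance into a pointwise comparison: given $T\in\GL_d(\bbR)$, set $\alpha:=\norm{T}_{\nu_1,\nu_2}$ and $\beta:=\norm{T^{-1}}_{\nu_2,\nu_1}$; substituting $y=Tx$ shows
\begin{align*}
  \tfrac{1}{\beta}\nu_1(y) \;\leq\; \nu_2(Ty) \;\leq\; \alpha\, \nu_1(y) \qquad\text{for all } y \in \bbR^d.
\end{align*}
Applying the comparison lemma to the pair $(\nu_1,\nu_2\circ T)$ gives $s_k(\nu_2\circ T)\leq \alpha\beta\, s_k(\nu_1)$, and by \Cref{lem:transformation-shadiness} we have $s_k(\nu_2\circ T)=s_k(\nu_2)$; swapping the roles of $\nu_1,\nu_2$ gives the same bound in the other direction.

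Minimizing over $T$ then yields $s_k(\nu_2) \leq d(\nu_1,\nu_2)\, s_k(\nu_1)$ and the symmetric inequality, i.e.
\begin{align*}
  \abs{\log s_k(\nu_1) - \log s_k(\nu_2)} \;\leq\; \log d(\nu_1,\nu_2) \;=\; \delta(\nu_1,\nu_2),
\end{align*}
which is the claimed $1$-Lipschitz property. Since $\delta(\nu_1,\nu_2)=0$ precisely when $\nu_1,\nu_2$ are linearly isometric, the map descends continuously to the Minkowski compactum. Existence of a minimizer $\tilde\nu$ is then immediate: $\log s_k$ is a continuous real-valued function on a compact space and thus attains its infimum.

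There is no genuine obstacle; the only thing to be careful about is keeping the two operator norms $\norm{T}_{\nu_1,\nu_2}$ and $\norm{T^{-1}}_{\nu_2,\nu_1}$ in the right order when passing from the Banach-Mazur distance to the pointwise norm comparison, and invoking \Cref{lem:transformation-shadiness} at the right moment so that the change of variables $T$ does not alter $s_k(\nu_2)$.
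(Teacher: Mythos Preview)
Your proposal is correct and follows essentially the same route as the paper's proof: both reduce to the estimate $s_k(\nu_2)\leq \norm{T}_{\nu_1,\nu_2}\norm{T^{-1}}_{\nu_2,\nu_1}\,s_k(\nu_1)$ and then invoke compactness of the Banach-Mazur compactum. The only cosmetic difference is that the paper conjugates the minimizing projection, $P\mapsto TPT^{-1}$, and bounds $\nu_2(TPT^{-1})$ directly via submultiplicativity of mixed operator norms, whereas you pull back the norm to $\nu_2\circ T$, use a pointwise comparison lemma, and then appeal to \Cref{lem:transformation-shadiness}; these are two sides of the same identity $\nu_2(TPT^{-1})=(\nu_2\circ T)(P)$.
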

  \begin{proof}
    Since $s_k$ is invariant under similarity transformations,
    $\nu \mapsto \log s_k(\nu)$ induces
    a well defined map on the Banach-Mazur compactum.
    
    Let $\nu_1, \nu_2$ be two norms on $\bbR^d$.
    By the definition of the Banach-Mazur distance, there is
    a linear map $T: (\bbR^d,\nu_1) \to (\bbR^d,\nu_2)$
    such that
    $\delta(\nu_1,\nu_2) = \log \norm{T}_{\nu_1,\nu_2}  \norm{T^{-1}}_{\nu_2,\nu_1} $.
    There is a rank-$k$ projection $P$ on $\bbR^d$ such that
    $s_k(\nu_1)=\nu_1(P)$.
    Now $Q:=T P T^{-1}$ is a rank-$k$ projection
    on $\bbR^d$ satisfying
    \[\nu_2(Q) \leq \norm{T}_{\nu_1,\nu_2}  \norm{T^{-1}}_{\nu_2,\nu_1}  \nu_1(P).\]
    Hence $s_k(\nu_2) \leq e^{\delta(\nu_1,\nu_2)} s_k(\nu_1)$.
    By symmetry we also get
    $s_k(\nu_1) \leq e^{\delta(\nu_1,\nu_2)} s_k(\nu_2)$.
    Taking the logarithm we obtain
    \begin{align*}
      \abs{\log s_k(\nu_1) - \log s_k(\nu_2)} \leq
      \delta(\nu_1,\nu_2).
    \end{align*}
    The final statement in the theorem then follows directly from
    the compactness of the Banach-Mazur compactum.
  \end{proof}

  The last result of this section now shows that in order to
  find bounds for the shadiness constants
  it is enough to consider polytopal norms. This was mentioned in
  \cite{epperlein2025auerbach}.
  \begin{corollary}
    Let $\Omega_d$ be the set of norms on $\bbR^d$
    and let $\Omega'_d$ be the subset of norms
    whose unit ball is a polytope.
    Then
    \[\max_{\nu \in \Omega_d} s_k(\nu) =
    \sup_{\nu \in \Omega_d'} s_k(\nu).\]
\end{corollary}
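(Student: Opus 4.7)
The plan is to combine the continuity of $s_k$ established in the previous theorem with density of polytopal norms in $\Omega_d$. The inequality $\sup_{\nu \in \Omega'_d} s_k(\nu) \leq \max_{\nu \in \Omega_d} s_k(\nu)$ is immediate from the inclusion $\Omega_d' \subseteq \Omega_d$, and the previous theorem guarantees that the maximum on the right is actually attained by some $\tilde\nu \in \Omega_d$. It remains to approximate $\tilde\nu$ by polytopal norms well enough to reach $s_k(\tilde\nu)$ in the limit.

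For a given $\eps > 0$, I would pick a finite symmetric $\eps$-net $V \subseteq \partial B_{\tilde\nu}$ with $V=-V$ and set $\calC_\eps := \conv(V)$. Since $\partial B_{\tilde\nu}$ is compact and the radial projection onto it is uniformly continuous, a standard argument shows that for small enough $\eps$ one has the sandwich
\begin{align*}
(1+\eps)^{-1} B_{\tilde\nu} \subseteq \calC_\eps \subseteq B_{\tilde\nu}.
\end{align*}
Equivalently, the polytopal norm $\nu_\eps := \norm{\cdot}_{\calC_\eps}$ satisfies $\tilde\nu \leq \nu_\eps \leq (1+\eps)\tilde\nu$ pointwise. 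Taking $T = \mathrm{id}$ in the definition of Banach-Mazur distance then yields $d(\tilde\nu, \nu_\eps) \leq 1+\eps$.

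By the $1$-Lipschitz continuity of $\log s_k$ from the previous theorem,
\begin{align*}
\abs{\log s_k(\tilde\nu) - \log s_k(\nu_\eps)} \leq \log(1+\eps),
\end{align*}
so $s_k(\nu_\eps) \to s_k(\tilde\nu)$ as $\eps \to 0$. Since each $\nu_\eps$ lies in $\Omega'_d$, this gives $\sup_{\nu \in \Omega'_d} s_k(\nu) \geq s_k(\tilde\nu) = \max_{\nu \in \Omega_d} s_k(\nu)$, which completes the proof. The only nontrivial ingredient is the polytopal approximation estimate, but this is a classical fact about convex bodies and presents no real obstacle; everything else is a direct consequence of the preceding theorem.
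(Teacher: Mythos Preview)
Your proposal is correct and follows essentially the same approach as the paper: approximate an arbitrary norm by polytopal ones via the convex hull of a finite symmetric net on the unit sphere, and then invoke the continuity of $s_k$ with respect to the Banach--Mazur distance established in the preceding theorem. The paper's proof is merely a two-sentence sketch of exactly this argument; you have simply supplied more of the details.
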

\begin{proof}
  By taking a sufficiently dense subset of the boundary of the unit ball, we can
  approximate every norm by a polytopal norm with respect to the
  Banach-Mazur distance. Together with the continuity of $s_k$
  this proves the result.
\end{proof}

\section{Operator Norms of Projections with Respect to Polytopal Norms}
\label{sec:calculating-norms}
From now in, let $\calC$ be a centrally symmetric convex polytope.
Let $V$ be the vertices of $\calC$, so $\calC$ is
the convex hull of $V$. 
For each pair of opposite vertices pick one vertex
and collect these vertices in a set $V'$,
so that $V=V' \cup -V'$.
Let $H$ be the set of normals of $\calC$. As discussed in \Cref{sec:notation},
\begin{align*}
  \calC = \{x \in \bbR^d \setsep h^\top x \leq 1 \text{ for all } h \in H\}.
\end{align*}
Set \[\calM_{k,\calC} := \{(\alpha,P) \in \bbR \times \bbR^{d \times d}
\setsep P \text{ is a rank-$k$ projection},\,\norm{P}_\calC \leq \alpha\}.\]  Using the function
\[f: \calM_{k,\calC} \to \bbR,\; (\alpha,P) \mapsto \alpha,\] we can
express the shadiness constant of $\calC$ in dimension $k$ as
\begin{align*}
  s_k(\calC)=\min_{x \in \calM_{k,\calC}} f(x).
\end{align*}
The advantage of this formulation is the fact that $f$ is a linear
function
and $\calM_{k,\calC}$ is a basic semialgebraic set, as we show now.
We start with the following explicit characterization of
$\calM_{k,\calC}$,
which uses the fact that the eigenvalues of a projection
are all either $0$ or $1$:
\begin{align*}
  \calM_{k,\calC} = \{(\alpha,P) \in \bbR \times \bbR^{d \times d} \setsep
  P^2=P,\; \tr P = k,\; P \calC \subseteq \alpha \calC \}.
\end{align*}
Now $P(\calC) \subseteq \alpha \calC$ is equivalent
to $\frac{1}{\alpha} P(V) \subseteq \calC$, which  is
equivalent
to $\frac{1}{\alpha} P(V') \subseteq \calC$ by the symmetry of $\calC$.
This in turn is equivalent
to $\frac{1}{\alpha} h^\top Pv \leq 1$ for all $h \in H, v \in V'$.
Therefore we can write $\calM_{k,\calC}$ as
\begin{align*}
  \calM_{k,\calC} = \{(\alpha,P) \in \bbR \times \bbR^{d \times d} \setsep
  P^2=P,\; \tr P = k,\; h^\top Pv \leq \alpha \text{ for all $h \in H, v \in V'$}\}.
\end{align*}
Therefore $\calM_{k,\calC}$ is characterized by $\frac{1}{2}\abs{V}\abs{H}$ linear
inequalities, one linear equation and $d^2$ quadratic equations
in the $d^2+1$ variables $P_{1,1},P_{1,2},\dots,P_{d,d}$ and $\alpha$.

All in all, we can characterize $s_k(\calC)$
as the solution to the following optimization problem.
\begin{oproblem}
  \label{op:P-formulation}
\begin{align*}
    \text{Minimize:} \quad & \alpha \\
  \text{Subject to:} \quad & P^2=P, \\
  &\tr P = k, \\
  &h^\top P v \leq \alpha,\quad v \in V', h \in H
\end{align*}
with variables $P \in \bbR^{d \times d}, \quad  \alpha \in \bbR$.
\end{oproblem}

This formulation can readily be used in a solver like SCIP,
which can find global optima of quadratic optimization
problems like ours, even in the non-convex case,
using branch-and-bound techniques. For implementation details see \Cref{sec:impl-general}.
As an approximation of the shadiness constant of the polytope
$\calI$ from \Cref{thm:lower-bound} we obtain
$1.0127$.

A rational approximation
of the optimal projection is given by
\begin{align*}
\begingroup
\renewcommand*{\arraystretch}{1.3}
  P = \begin{pmatrix}
    \frac{82602121}{79729122} & \frac{54836807}{79729122} & \frac{-722323}{13288187} \\
\frac{-4217717}{79729122} & \frac{-774259}{79729122} & \frac{1060409}{13288187} \\
\frac{695635}{39864561} & \frac{13277555}{39864561} &
                                                      \frac{12938397}{13288187}
  \end{pmatrix}.
  \endgroup
\end{align*}
Notice that this approximation is still a rank-$2$ projection.
Calculating the operator norm of $P$ exactly\footnote{As the
  calculations above show, 
  $\norm{P}_{\calC} = \max \{h^\top P v \setsep v \in V', h \in H\}.$}, we obtain
\begin{align*}
  s_2(\calI) \leq \norm{P}_\calI = \frac{14386149522}{14205071903} \approx 1.0127.
\end{align*}
Remember, however, that we are interested in provable lower
bounds for $s_2(\calI)$.

The above formulation involves $d^2$ variables.
For the special case of rank-$(d-1)$ projections,
we can significantly reduce the number of variables
in our optimization problem as follows.
Observe that a rank-$(d-1)$ projection
is uniquely determined by a non-zero vector $u$ spanning its kernel and a
non-zero vector $w$ perpendicular to its image.
The projection $P$ is then given by 
\begin{align*}
  Px &= x - \frac{w^\top x}{w^\top u}u.
\end{align*}
To see this let $x \in \bbR^d$ be an arbitrary vector.
Since $0=P^2x-Px=P(Px-x)$, there must be $\gamma \in \bbR$
such that $Px-x=\gamma u$.
Additionally $w^\top Px = 0$, so $w^\top x = -\gamma w^\top u$.
Finally we get
$Px = x- \frac{w^\top x}{w^\top u} u$.
This is defined only for $w^\top u \neq 0$, but
this is necessary anyway, since the kernel
and image of a projection intersect only in the origin.
Multiplying $w$ or $u$ by a non-zero scalar
does not change the projection. Hence we
may additionally assume that $w^\top u >0$.

Now consider $h \in H$ and $v \in V'$.
Our inequality $h^\top Pv \leq \alpha$ translates
to
\begin{align*}
  h^\top (v - \frac{w^\top v}{w^\top u} u) \leq \alpha.
\end{align*}
Multiplication by $w^\top u$ gives
\begin{align*}
  h^\top v w^\top u -w^\top v h^\top u \leq \alpha w^\top u. 
\end{align*}

Therefore we can also characterize $s_{d-1}(\calC)$
as the solution to the following optimization problem.
\addtocounter{theorem}{1}
\begin{suboproblem}
  \label{op:kernel-formulation}
\begin{align*}
    \text{Minimize:} \quad & \alpha \\
    \text{Subject to:} \quad & w^\top u > 0, \\
                           & h^\top v w^\top u-w^\top v h^\top u -
                             \alpha w^\top u\leq  0,\quad
                             v \in V', h \in H
\end{align*}
with variables $u,w \in \bbR^d$, $\alpha \in \bbR$.
\end{suboproblem}

The domain of this problem is not compact. 
We can change that by first observing that we can replace
$w^\top u>0$ by $w^\top u \geq 0, w \neq 0, u\neq 0$.
Clearly the first condition implies the second.
Now assume there would be $w, u \in \bbR^d \setminus \{0\}$
with $w^\top u =0$ satisfying the second condition 
in \Cref{op:kernel-formulation}.
This would immediately imply $w^\top v h^\top u \geq 0$
for all $v \in V', h \in H$. Since $\calC$ is centrally symmetric, we have $H=-H$,
so we would necessarily also have 
$w^\top v h^\top u=0$ for all $v \in V', h \in H$.
Since both $V'$ and $H$ span $\bbR^d$ as a vector space, and $w \neq 0, u \neq 0$
we can find $v \in V'$ and $h \in H$ such that $w^\top v h^\top u \neq 0$,
contradiction.

Finally observe that that all conditions of \Cref{op:kernel-formulation}
are invariant under scaling of $u$ and $w$ by positive scalars,
hence we may assume $u,w \in [-1,1]^d$
and $u_i,w_j \in \{-1,1\}$ for some indices
$i,j \in \{1,\dots,d\}$.
Additionally
$(u,w,\alpha)$ is feasible iff $(-u,-w,\alpha)$ is feasible,
so we may even assume $w_j=1$.
All in all this allows us 
to rewrite our optimization problem in the following form 
with a compact domain:
\begin{suboproblem}
  \label{op:kernel-formulation-compact}
\begin{align*}
    \text{Minimize:} \quad & \alpha \\
    \text{Subject to:} \quad & w^\top u \geq 0, \\
                               &u, w \in [-1,1]^d, \\
                               &\exists i \in \{1,\dots,d\}: u_i \in \{-1,1\},\\
                               &\exists j\in \{1,\dots,d\}: w_j= 1,\\
                               & h^\top v w^\top u-w^\top v h^\top u -
                             \alpha w^\top u\leq  0,\quad
                             v \in V', h \in H
\end{align*}
with variables $u,w \in \bbR^d, \alpha \in \bbR$.
\end{suboproblem}
\addtocounter{theorem}{1}

\section{A Proof Certificate Based on Sums of Squares}
\label{sec:sum-of-squares}

In this section we give a first computer assisted proof of
\Cref{thm:lower-bound} based on \Cref{op:P-formulation}. The core of the proof
will consist of the construction of a decomposition
of a negative number as a rational weighted sum-of-squares. We will call
this decomposition the \enquote{proof certificate}.
Checking its validity is simply done
by expanding this decomposition and
checking that it evaluates to a negative number.
The main part of this section is devoted to the
construction of this decomposition, but the only part really necessary
for the proof is the resulting decomposition itself.

In order to show that $\alpha^*$ is a lower bound
for \Cref{op:P-formulation} or \Cref{op:kernel-formulation-compact},
we have to show that for $\alpha < \alpha^*$ the
constraints become infeasible.
So let us consider the set
\begin{align*}
   \calM_{k,\calC}(\alpha^*) := \{P \in \bbR^{d \times d} \setsep
  P^2=P,\, \tr P = k,\, h^\top Pv \leq \alpha^* \text{ for all $v \in V',h
  \in H$}\}
\end{align*}
of rank-$k$ projections
whose $\calC$-norm is at most $\alpha^*$.
Our goal is to show that $\calM_{k,\calC}(\alpha^*)$
is empty.
Notice that we can easily bound the size of
the entries of $P \in \calM_{k,\calC}(\alpha^*)$.
Let $C_\calC$ be the constant from \Cref{lem:compare-to-euclidean},
so
\begin{align*}
  \norm{A}_2 \leq C_\calC \norm{A}_\calC 
\end{align*}
for every
$A \in \bbR^{d \times d}$.
If $\norm{P}_\calC \leq \alpha^*$, then for
each standard basis vector $e_i$ we have
\[Pe_i \in PB_2 \subseteq \norm{P}_2 B_2  \subseteq  \norm{P}_2 B_\infty\]
and hence
\begin{align}
  \abs{P_{ij}} \leq \norm{P}_2 \leq C_\calC \norm{P}_\calC \leq
  C_\calC \alpha^* \quad \text{for } i,j \in \{1,\dots,d\}.\label{eq:bound-projection-entries}
\end{align}
We can use the trace condition to eliminate one entry of $P$
by setting $P_{d,d}=k-\sum_{i=1}^{d-1} P_{i,i}$.

Writing $x=(x_1,\dots,x_n):=(P_{1,1},P_{1,2},\dots,P_{d,d-1})$ for our variables
we have to show that
\begin{align*}
  \calM = \{ x \in \bbR^{d^2-1} \setsep\quad f_i(x)=0,\; i=1,\dots,s, \quad
  g_j(x) \geq 0,\; j = 1,\dots,t \} =\emptyset,
\end{align*}
where 
the polynomials $f_i$ correspond to the $s:=d^2$-entries of
$P^2-P$
and the polynomials $g_j$ correspond
to the $\abs{V'}\abs{H}$ inequalities of the form $\alpha^* -
h^\top P v \geq 0$
together with the $d^2-1$ inequalities of the form
$(C_\calC\alpha^*)^2 - P_{i,j}^2  \geq 0$
from \eqref{eq:bound-projection-entries}.
In total this yields \[t:=\abs{V'}\abs{H}+d^2-1\]
inequalities.

So on an abstract level we
want to show that a compact basic semialgebraic set $\calM$ is empty. We can do
this by showing that the constant function $-1$ is positive over
$\calM$.
Actually, any other constant negative function works as well and
we will use this in our concrete implementation in order to
improve the size of our proof certificate.
At first, this sounds like a nonsensical rephrasing of the problem.
It is the following algebraic reformulation that actually makes it
useful for us:
Find polynomials $p_1,\dots,p_{s} \in \bbR[x]$ and
$q_0, q_1,\dots,q_t \in \bbR[x]^2$ such that
\begin{align*}
  -1 = q_0 + \sum_{j=1}^t q_j g_j + \sum_{i=1}^s p_i f_i.
\end{align*}
We call this a \emph{weighted sum-of-squares decomposition} of $-1$.
On the set $\calM$ the right hand side is clearly non-negative,
whereas the left hand side is negative, hence $M$ must be empty.

On the other hand, if $\calM$ is indeed empty, the existence of such polynomials $p_i$ and $q_j$
is guaranteed by Putinar's Positivstellensatz\footnote{Usually this
    theorem
    is stated without the polynomials $\calF$ and the corresponding
    equations. Our version is easily seen to be equivalent to this
    more standard form by noting that $I(\calF) = \calM(\calF \cup
    -\calF)$, which reduces to $p=\frac{1}{4}((p+1)^2-(p-1)^2)$.} \cite{putinarPositivePolynomialsCompact1993}:

\begin{theorem}[Putinar's Positivstellensatz]
  Consider a polynomial $h$ and families of polynomials $\calF = \{f_1,\dots,f_s\}$, $\calG = \{g_1,\dots,g_t\}$ for which the quadratic module $M(\calG) + I(\calF)$ is Archimedean.
    If $h(y)>0$ for all
    \[y \in \{x \in \bbR^n \setsep f_i(x) = 0, i=1,\dots,s, \quad
      g_j(x) \geq 0,j =1,\dots,t\}\] then $h \in  M(\calG) + I(\calF)$.
    
    In particular, $\{x \in \bbR^n \setsep f_i(x) = 0, i=1,\dots,s, \quad
      g_j(x) \geq 0,j =1,\dots,t\} = \emptyset$, iff \[-1 \in  M(\calG)+ I(\calF).\]
\end{theorem}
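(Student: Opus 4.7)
The plan is to reduce the statement to the standard form of Putinar's Positivstellensatz, which is stated only for inequality constraints, and then to deduce the ``in particular'' equivalence by a direct argument.

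First, I would replace each equation $f_i = 0$ by the pair of inequalities $f_i \geq 0$ and $-f_i \geq 0$. Setting $\calG' := \calG \cup \calF \cup (-\calF)$, the basic semialgebraic set associated to $\calG'$ is exactly the set $\{x \in \bbR^n \setsep f_i(x) = 0,\, g_j(x) \geq 0\}$ appearing in the theorem. Next I would verify the key algebraic identity
\begin{align*}
  M(\calG) + I(\calF) \;=\; M(\calG'),
\end{align*}
which lets us transfer the Archimedean hypothesis and the conclusion between the two formulations. The inclusion ``$\supseteq$'' is immediate, since any term $(r_i - s_i) f_i$ with $r_i, s_i \in \bbR[x]^2$ lies in $I(\calF)$. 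For ``$\subseteq$'', the footnoted decomposition $p = \tfrac{1}{4}((p+1)^2 - (p-1)^2)$ shows that for each $p \in \bbR[x]$,
\begin{align*}
  p f_i \;=\; \tfrac{1}{4}(p+1)^2 \cdot f_i + \tfrac{1}{4}(p-1)^2 \cdot (-f_i) \;\in\; M(\{f_i, -f_i\}) \;\subseteq\; M(\calG'),
\end{align*}
so $I(\calF) \subseteq M(\calG')$, and combining with $M(\calG) \subseteq M(\calG')$ gives the claim.

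Second, I would invoke the standard version of Putinar's Positivstellensatz (with only inequality constraints and an Archimedean quadratic module) applied to $\calG'$: since $M(\calG') = M(\calG) + I(\calF)$ is Archimedean by assumption, and $h$ is strictly positive on the basic semialgebraic set cut out by $\calG'$, we conclude $h \in M(\calG') = M(\calG) + I(\calF)$, as desired.

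Finally, for the ``in particular'' statement, I would argue directly. If the set is empty, then $h \equiv -1$ trivially satisfies $h(y) > 0$ for all $y$ in the (empty) set, so the main part gives $-1 \in M(\calG) + I(\calF)$. Conversely, if $-1 = q_0 + \sum_j q_j g_j + \sum_i p_i f_i$ with $q_0, q_j \in \bbR[x]^2$, then evaluating at any hypothetical $y$ in the set would yield $-1 \geq 0$, a contradiction. The main obstacle is of course the standard Putinar theorem itself, whose proof requires non-trivial machinery (a Kadison--Dubois type representation theorem for Archimedean partially ordered rings); in our setting this is cited as a black box from \cite{putinarPositivePolynomialsCompact1993}, so the work in our proof reduces to the algebraic identification of quadratic modules above.
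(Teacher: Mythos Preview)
Your proposal is correct and follows exactly the approach the paper indicates: the paper does not prove the theorem but cites \cite{putinarPositivePolynomialsCompact1993} and, in the accompanying footnote, explains that the version with equality constraints reduces to the standard one via the identity $I(\calF) = M(\calF \cup -\calF)$, established precisely by the decomposition $p = \tfrac{1}{4}((p+1)^2 - (p-1)^2)$ that you use. Your derivation of the ``in particular'' clause is also the intended straightforward argument.
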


While this theorem guarantees the existence of the necessary polynomials,
it doesn't tell us how to find those.
The sum-of-squares method owes its usefulness to the
fact that we can find these polynomials by semidefinite programming.
This approach has a long history based on work by many people, see
e.g. \cite{marshallPositivePolynomialsSums2008,
  blekhermanSemidefiniteOptimizationConvex2012,
  theobaldRealAlgebraicGeometry2024,henrionMomentSOSHierarchyLectures2021}.
It found many applications in diverse fields
of applied mathematics and engineering discussed in the works above.
For an application in pure mathematics see e.g. \cite{kalubaPropertyPropertyT2021}
and for an application specifically in convex geometry see \cite{kellnerSumSquaresCertificates2016}.

In our case the procedure works as follows.
Let $p$ be a real polynomial.
Notice that $p \in \bbR[x]^2$ is equivalent 
to the existence of $r \in \bbN$ and a positive semidefinite matrix
$S \in \bbR^{m \times m}$ such that $p(x_1,\dots,x_n) = [x]_r^\top S [x]_r$,
where we denote by $[x]_r$ the vector 
of length $m:=\abs{\bbN^n_{\leq r}}=\binom{r+n}{n}$
containing all monomials in $\bbR[x_1,\dots,x_n]$
up to degree $r$ in lexicographic order.
To see this, start with a sum-of-squares polynomial $p=p_1^2+\dots+p_k^2$.
Denote by $\ell_1,\dots,\ell_k$ the coefficient vectors
of $p_1,\dots,p_k$, i.e. $p_i = \ell_i^\top [x]_r$. Using this notation 
we can write
\begin{align*}
    p = (\ell_1^\top [x]_r)^\top \ell_1^\top [x]_r + \dots + (\ell_k^\top [x]_r)^\top \ell_k^\top [x]_r
= \sum_{i=1}^k [x]_r^\top (\ell_1 \ell_1^\top +\dots+\ell_k \ell_k^\top) [x]_r.
\end{align*}
Now $S:=\ell_1 \ell_1^\top +\dots+\ell_k \ell_k^\top$ is a positive semidefinite $m \times m$ matrix.
On the other hand, every positive semidefinite matrix $R$
can be decomposed as $R=L^\top L$, e.g. by Cholesky decomposition,
where $L$ is a real $m \times m$ matrix whose rows we denote by $\ell_1,\dots,\ell_m$,
so if $p = [x]_m^\top R [x]_m$ we again obtain the sum-of-squares decomposition
$p = \sum_{i=1}^m (\ell_i^\top [x]_r)^2$.

Therefore given a bound on the degrees of the polynomials involved,
we can find a weighted sum-of-squares decomposition 
by semidefinite programming.
To be reasonably fast this is done using floating point arithmetic.
To obtain an exact rational weighted sum-of-squares decomposition
we proceed as follows, following \cite{peyrlComputingSumSquares2008a}.
Instead of relying on arguments on the position
of our Gram matrix in the cone of positive semidefinite matrices,
we will use the fact that we know bounds on the
absolute value of our variables, see
\eqref{eq:bound-projection-entries}. For more elaborate
rounding procedures for weighted sum-of-square decompositions,
see \cite{magronExactReznickHilbertArtin2021,
  davisRationalDualCertificates2024, davisDualCertificatesEfficient2022}
Assume we have
\begin{align*}
  -1 \approx [x]_r^\top \tilde{Q} [x]_r + \sum_{j=1}^t \tilde{q}_j g_j
  + \sum_{i=1}^s \tilde{p}_i f_i 
\end{align*}
with floating point polynomials 
$\tilde{p}_i \in \bbR[x]$, $\tilde{q}_j \in \bbR[x]^2$ and a positive semidefinite floating point matrix $\tilde{Q}$.

We first approximate the polynomials $\tilde{p}_i, \tilde{q}_j$ by
polynomials $p_i \in \bbQ[x]$ and $q_j \in \bbQ[x]^2$.
Similarly we approximate $\tilde{Q}$ by
a rational matrix $Q'$.
Then we project $Q'$ orthogonally to the space
\begin{align*}
  \{T \in \bbR^{d \times d} \setsep T \text{ is symmetric }, -1
  -\sum_{j=1}^t q_j g_j
  - \sum_{i=1}^s p_i f_i  = [x]_r^\top T [x]_r\}.
\end{align*}
As shown in \cite[Proposition 7]{peyrlComputingSumSquares2008a},
this projection $Q$ of $Q'$
is explicitly given by
\begin{align*}
    Q_{\alpha,\beta} = Q'_{\alpha,\beta} - \frac{1}{\#(\alpha+\beta)} (\sum_{\alpha'+\beta'=\alpha+\beta} Q'_{\alpha',\beta'} - h_{\alpha+\beta}).
\end{align*}
where $\alpha, \beta,\alpha',\beta' \in \bbN_{\leq r}^n,$
\[\#(\gamma) := \{(\alpha',\beta') \in \bbN^n_{\leq r} \times \bbN^n_{\leq r} \setsep \gamma=\alpha'+\beta'\}\]
and
\[h(x) := \sum_{\gamma} h_\gamma x^\gamma := -1 - \sum_{j=1}^t q_j(x) g_j(x)
  - \sum_{i=1}^s p_i(x) f_i(x).\]

Now we use the fact, that we know
a bound $\Omega$ for the absolute value of our variables
and that $\Omega^2 - x_i^2 \in \calG$ for $i \in \{1,\dots,n\}$, see
the discussion in the beginning
of this section.
As shown in \Cref{thm:offset} there is $\Delta \in \bbQ_{>0}$
such that
$\Delta - [x]_r^\top [x]_r  \in M_\bbQ(\calG)$. 
Notice that $Q$ is a symmetric rational matrix.
but is not necessarily positive semidefinite. If we choose
our rational approximations good enough,
it will nevertheless be sufficiently close to $\tilde{Q}$ such that
the eigenvalues of $Q$ will be larger than $-\frac{1}{4\Delta}$.

Then $Q + \frac{1}{2\Delta}I$ is positive definite and we
can compute its $LDL^\top$-decomposition,i.e. rational square matrices $L,D$ with $D$ diagonal and positive such that
\begin{align*}
    Q+ \frac{1}{2\Delta} I = L^\top DL.
\end{align*}
Denote the rows of $L$ by $\ell_i, i=1,\dots,m=\binom{r+n}{n}$.
Set
\begin{align*}
  q_0(x):=[x]_r^\top (Q + \frac{1}{2\Delta}) [x]_r &:= [x]_r^\top L^\top D L [x]_r \\
  &= \sum_{i=1}^{m} D_{ii} (\ell_i^\top [x]_r)^2
    \in \bbQ[x]^2.
\end{align*}
By the definition of $Q$ we have
\begin{align*}
  -1  - \sum_{j=1}^t g_j q_j - \sum_{i=1}^s f_i p_i
  &= [x]_r^\top Q [x]_r
    = [x]_r^\top (Q+ \frac{1}{2\Delta} I) [x]_r - \frac{1}{2\Delta} [x]_r^\top [x]_r\\
  &= q_0(x) - \frac{1}{2} +  \frac{1}{2\Delta}(\Delta - [x]_r^\top [x]_r) \\
  -1+\frac{1}{2}
  &=q_0(x)+ \sum_{j=1}^t g_j(x) q_j(x)  + \sum_{i=1}^s f_i(x) p_i(x)  +
    \frac{1}{2\Delta}(\Delta - [x]_r^\top [x]_r)\\
  -1
  &= 2 (q_0(x)+\sum_{j=1}^t g_j(x) q_j(x)+\sum_{i=1}^s f_i(x) p_i(x))
    + \frac{1}{\Delta}(\Delta - [x]_r^\top [x]_r))
\end{align*}
We have already seen that $\Delta - [x]_r^\top [x]_r \in M_\bbQ(G )$,
so the whole right hand side lies in $M_\bbQ(\calG) + I_\bbQ(\calF)$.
We thus produced a rational weighted sum-of-squares decomposition of $-1$ over
our basic semi-algebraic set $\calM$. This constitutes
a certificate that $\calM$ is empty and hence a proof of
\Cref{thm:lower-bound}. For
the concrete implementation details and the obtained certificate see \Cref{sec:impl-sos}.

We finish this section by sketching how the previously described
methods can alternatively be applied to \Cref{op:kernel-formulation-compact}.

In order to show that $s_k(\calC) \geq \alpha^*$, we have
to show that
\begin{align*}
  \calN_{\calC}(\alpha^*) := \{u,w \in \bbR^d \setsep &w^\top u \geq 0, \\
                               &u, w \in [-1,1]^d, \\
                               &\exists i \in \{1,\dots,d\}: u_i \in \{-1,1\},\\
                               &\exists j\in \{1,\dots,d\}: w_j= 1,\\
                               & h^\top v w^\top u-w^\top v h^\top u -
                             \alpha^* w^\top u\leq  0,\quad
                             \text{for }v \in V', h \in H \}= \emptyset.
\end{align*}
Notice however that $\calN_\calC(\alpha^*)$ is not 
a basic semialgebraic set so we can not apply Putinar's
Positivstellensatz
directly.
We can however represent $\calN_\calC(\alpha^*)$
as the union of $2d^2$ basic semialgebraic sets
of the form $\calN_{\calC,i,j,\tau}$
with $i,j \in \{1,\dots,d\}$ and $\tau \in \{-1,1\}$,
where
\begin{align*}
  \calN_{\calC,i,j,\tau}(\alpha^*) := \{u,w \in \bbR^d \setsep &w^\top u \geq 0, \\
                               &u_k^2 \leq 1,\, w_k^2 \leq 1, \quad
                                 \text{ for }k \in \{1,\dots,d\}, \\
                               & u_i = \tau, \\
                               & w_j = 1,\\
                               & h^\top v w^\top u-w^\top v h^\top u -
                               \alpha^* w^\top u\leq  0,\quad
                               \text{for } v \in V', h \in H \}.
\end{align*}
Since we fix two of the entries of $v$ and $w$, this
is effectively a basic semialgebraic set described by polynomials
in $2d-2$ variables, which is a significant reduction compared
to the $d^2-1$ variables we had before. In practice
this advantage seems to be outweighed by
needing significantly higher degree for the polynomials
in the weighted sum-of-squares decomposition to achieve
comparable lower bounds.

We can therefore obtain a certificate that $\calC$ is $\alpha^*$-shady,
if we produce a rational weighted sum of squares decomposition
for each of the set $\calN_{\calC,i,j,\tau}(\alpha^*),\; i,j \in \{1,\dots,d\}, \tau \in \{-1,1\}$.

\section{A Proof Certificate Based on Linear Duality}
\label{sec:eps-dense-farkas}

In this section we provide another way to produce a certificate for
a lower bound $\alpha^*$ for
\Cref{op:kernel-formulation}. This will produce
an alternative computer assisted proof of \Cref{thm:lower-bound}.
Again we want to show that the
constraints become infeasible for $\alpha \leq \alpha^*$.  For a
fixed vector $w \in \bbR^d$, we can accomplish this using duality as follows.
Let $A_{w,\alpha^*}$ be the $\abs{V'}\abs{H} \times d$ matrix
whose rows are given by
$h^\top v w^\top - w^\top v h^\top - \alpha^* w^\top$ for
$v \in V', h \in H$.  We can rewrite the infeasibility of our problem as
\begin{align*}
  \{ u \in \bbR^d \setsep (-w^\top) u < 0, \;A_{w,\alpha^*} u \leq 0\} = \emptyset.
\end{align*}
Farkas's lemma (see e.g. \cite[Section
5.8]{boydConvexOptimization2004}) states that this is equivalent to
\begin{align*}
  \calD_{w,\alpha^*}:=\{y \in \bbR^d \setsep A_{w,\alpha^*}^\top y = w, \quad y \geq 0\} \neq \emptyset.
\end{align*}
So to show that all projections with range $w^\bot$ have norm
larger than $\alpha^*$, we have
to show that $w$ is contained in the convex cone generated by the rows
of $A_{w,\alpha^*}$. Since the entries of that matrix are rational,
we can also find positive rational solutions $y_w$.
This vector $y_w \in \calD_{w,\alpha^*} \cap \bbQ^{\abs{V'}\abs{H}}$ serves as a certificate that $w^\bot$
is never the image of a projection with norm less or equal to $\alpha^*$.
In terms of the relative projection constant this means that
\begin{align*}
  \lambda(w^\perp, (\bbR^d,\norm{\cdot}_\calC)) \geq \alpha^*.
\end{align*}

Now we can of course not produce certificates for all $w \in \bbR^d$,
so we only produce certificates for a
sufficiently dense set $\tilde{W}$.
This also covers all vectors in \[W := \{\gamma w \setsep w \in \tilde{W},
\gamma \in \bbR \setminus \{0\}\}.\]
After we obtained the certificates for all vectors in $\tilde{W}$,
we know that a projection
with image $w^\perp, w \in W$ has norm at least $\alpha^*$.
For all other projections $P$ we
get a lower bound for $\norm{P}_\calC$ slightly worse than $\alpha^*$
as follows.
Assume that $W$ contains an $\eps$-dense subset
of the Euclidean unit sphere, i.e. for all $x \in \bbR^d$
with $\norm{x}_2=1$ there is $w \in W$ with $\norm{w-x}_2 \leq \eps$.
For all $w' \not\in W$ we use the following estimates:
Let $\calC$ be the constant from \Cref{lem:compare-to-euclidean}, so we have $\norm{A}_\calC \leq C_\calC
\norm{A}_2$ for all $A \in \bbR^{d \times d}$.
Let $P$ be a projection whose image is perpendicular
to $w' \in \bbR^d$ with $\norm{w'}_2=1$. There is $w \in W$
with $\norm{w-w'}_2 \leq \eps$.
Therefore there is a rotation\footnote{
  More precisely $Q$ is an orthogonal map which is a rotation
  on the plane spanned by $w$ and $w'$ and which fixes the
  orthogonal complement of this plane.}
  $Q$
mapping $w'$ to $w$ such that $\norm{Q-I}_2 \leq \eps$
as well as $\norm{Q^{-1}-I}_2 \leq \eps$. Observe that $\norm{Q}_2=1$
and that $QPQ^{-1}$ is a projection
with image $w^\perp$.
Now
\begin{align*}
  \norm{P}_\calC
          &\geq \norm{(QPQ^{-1}}_\calC-\norm{P-QPQ^{-1}}_\calC \\
  \norm{P-QPQ^{-1}}_\calC
          &\leq \norm{P-QP}_\calC + \norm{QP-QPQ^{-1}}_\calC \\
          &\leq \norm{P}_\calC \norm{I-Q}_\calC + \norm{QP}_\calC \norm{I-Q^{-1}}_\calC \\
          &\leq C_\calC\norm{P}_\calC \norm{I-Q}_2
            + C_\calC^2\norm{Q}_2\norm{I-Q^{-1}}_2\norm{P}_\calC \\
          &\leq \eps(C_\calC+C_\calC^2) \norm{P}_\calC \\
  \norm{P}_\calC
          &\geq
            \frac{\norm{QPQ^{-1}}_\calC}{1+\eps(C_\calC+C_\calC^2)}
            \geq \frac{\alpha^*}{1+\eps(C_\calC+C_\calC^2)}.
\end{align*}

For our concrete polytope $\calI$ from
\Cref{thm:lower-bound} we get
\begin{align}
  C_\calI \approx \sqrt{\frac{132}{27}} \approx 2.253, \label{eq:spectral-norm-bound-I}
\end{align}
which is too large for our purposes.
Recall, however, that according to \Cref{lem:transformation-shadiness}
we can as well obtain lower
bounds for $s_{2}(T\calI)=s_2(\calI)$ for an appropriately chosen
invertible matrix $T$.
For $\calJ:=T \calI$ with
\begin{align*}
\begingroup
\renewcommand*{\arraystretch}{1.2}
T:=\begin{pmatrix}
\frac{11}{10} & \frac{11}{10} & \frac{11}{10} \\
\frac{7}{10} & \frac{1}{5} & \frac{-9}{10} \\
\frac{3}{5} & \frac{-9}{10} & \frac{3}{10} \\
\end{pmatrix}
\endgroup
\end{align*}
we get
\begin{align}
  C_\calJ \leq 1.55 \label{eq:spectral-norm-bound-II}
\end{align}
see \Cref{sec:impl-general}. This matrix $T$ approximately brings
$\calI$ into John position, i.e. a Euclidean ball is (almost) the minimal
inscribed ellipsoid of $\calJ$.

Now set $\tilde{W} = \bigcup_{k,\ell \in \{-n,\dots,n\}}
\{(1,\frac{k}{n},\frac{\ell}{n}),(\frac{k}{n},1,\frac{\ell}{n}),(\frac{k}{n},\frac{\ell}{n},1)\}$.
Let $v \in \bbR^3$ be in the Euclidean unit sphere.
By permuting coordinates if necessary, assume that $\abs{v_1} \geq \abs{v_2} \geq \abs{v_3}$.
Then there is $\tilde{w} \in \tilde{W}$
such that \[\tilde{w}_1=1,\; \abs{\frac{v_2}{v_1}-\tilde{w}_2}\leq \frac{1}{2n},\; \abs{\frac{v_3}{v_1}-\tilde{w}_3} \leq
\frac{1}{2n}.\] 
Set $\alpha:= \frac{1}{v_1}$,
$\beta:= \frac{\abs{v_1} \norm{\tilde{w}}_2}{v_1}$,
$\tilde{v}:= \alpha v$
and $w:=\frac{1}{\beta} \tilde{w}$.Then
$\norm{w}_2 = \norm{v}_2=1$.
Notice that $\alpha$ and $\beta$ have the same sign and are both
larger or equal one in absolute value, so in particular $\alpha\beta
\geq 1$. Hence
\begin{align*}
  \norm{\tilde{v}-\tilde{w}}_2^2 - \norm{v-w}_2^2 
  &= \norm{\alpha v -\beta w}_2^2 -\norm{v-w}_2^2 \\
  &=(\alpha^2-1)\norm{v}^2+(\beta^2-1) \norm{w}^2 + 2(1-\alpha\beta) v^\top w\\
  &\geq \alpha^2+\beta^2-2 + 2 - 2\alpha\beta = (\alpha-\beta)^2 \geq 0
\end{align*}
Since $\tilde{v}_1=\tilde{w}_1=1$ we get
$\norm{v-w}_2 \leq \norm{\tilde{v}-\tilde{w}}_2 \leq
 \sqrt{\frac{2}{4n^2}} = \frac{1}{n\sqrt{2}}$.
 This shows that $W$ is $\frac{1}{n\sqrt{2}}$-dense in the Euclidean
 unit sphere and thus we can set $\eps=\frac{1}{n\sqrt{2}}$ in the calculation above.
  
If we now further set $n:=1400$ and $\alpha^*:=\frac{84}{83}$
we get
\begin{align}
  \norm{P}_\calC \geq
  \frac{\frac{84}{83}}{1+\frac{1.55+1.55^2}{1400\sqrt{2}}} \geq 1.01.
  \label{eq:final-norm-estimate}
\end{align}

We will discuss in \Cref{sec:impl-farkas} how we obtained
the certificates for the vectors in $\tilde{W}$.
Together with these certificates,
\eqref{eq:final-norm-estimate} shows that $\calJ$, and hence by
\Cref{lem:transformation-shadiness}
also $\calI$, have a shadiness constant of at least $1.01$. 
This again proves \Cref{thm:lower-bound}.

\section{Non-Shadiness of Polytopes with Few Vertices}
\label{sec:non-shadiness}
In this section we show
that no three-dimensional centrally symmetric convex polytope with ten or fewer vertices is
shady.

This is more or less a direct consequence of the following lemma, for
which
we need the notion of centrally symmetric
triangulated polyhedron.
By this we mean a 
centrally symmetric convex polytope $\calC$ with zero-dimensional faces $V$
and one-dimensional faces $E$ together with sets $V' \supseteq V$
and $E' \supseteq E$, called the \emph{vertices} and \emph{edges} of the triangulation,
such that:
\begin{enumerate}[(a)]
\item $\tilde{V}$ consists of points on the boundary of $\calC$,
\item $\tilde{E}$
consists of line segments with pairwise disjoint interiors, contained in the boundary of $\calC$
and connecting vertices in $V'$,
\item $\tilde{V}=-\tilde{V}$ ,
  $\tilde{E}=-\tilde{E}$,
  \item for every facet $G$ of $\calC$, the vertices and edges contained
in $G$ form a triangulation of $G$, i.e. partition
$G$ into triangles. We denote the entirety of these triangles on
all facets by $\tilde{F}$.
\end{enumerate}
The triangulation naturally defines an undirected graph
with vertex set $\tilde{V}$ and edge set $\tilde{E}$.
By an \emph{$n$-cycle in the triangulation} we mean an $n$-cycle in this
graph, i.e. pairwise distinct vertices $v_1,\dots,v_n$ such that
$v_i$ and $v_{i+1}$ for $i \in \{1,\dots,n-1\}$, as well as $v_n$
and $v_1$ are connected by an edge.

\begin{lemma}
  \label{lem:10-vertices-combinatorial}
  Let $\calC \subseteq \bbR^3$ be a centrally symmetric triangulated
  convex polytope with $10$ vertices (in the triangulation). Then there is a pair of vertices $v$, $w$
  such that $v,w,-v,-w$ form a $4$-cycle.
\end{lemma}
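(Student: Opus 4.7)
The plan is to exploit central symmetry by passing to the quotient under the antipodal involution $\sigma \colon x \mapsto -x$. First I would apply the standard counting for triangulations of $\partial \calC \cong S^2$: since every element of $\tilde F$ is a triangle, we have $2\abs{\tilde E} = 3\abs{\tilde F}$, which combined with Euler's formula $\abs{\tilde V} - \abs{\tilde E} + \abs{\tilde F} = 2$ and the hypothesis $\abs{\tilde V} = 10$ yields $\abs{\tilde E} = 24$ and $\abs{\tilde F} = 16$.

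Next I would verify that $\sigma$ acts freely on the cells of the triangulation. It is free on $\tilde V$, because $0$ lies in the interior of $\calC$ and is therefore not a vertex. It is also free on $\tilde E$: any $\sigma$-fixed edge would have to be the line segment from some $v$ to $-v$, but that segment passes through the origin and so cannot lie in $\partial \calC$. Consequently, the quotient inherits a CW decomposition of $\partial \calC / \sigma \cong \mathbb{RP}^2$ with exactly $5$ vertices, $12$ edges, and $8$ triangular $2$-cells, consistent with $\chi(\mathbb{RP}^2) = 1$.

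Now I would reinterpret this quotient as a multigraph $\Gamma$ on $5$ vertices with $12$ edges and no loops (a loop would again correspond to an edge from some $v$ to $-v$, excluded above). A loopless simple graph on $5$ vertices has at most $\binom{5}{2} = 10$ edges, so $\Gamma$ must contain a multi-edge: two distinct edges of $\tilde E$ with the same endpoints modulo $\sigma$. After replacing one copy by its antipode if necessary, these are $\{v, w\}$ and $\{v, -w\}$ for some $v, w \in \tilde V$, and central symmetry of $\tilde E$ furnishes the further edges $\{-v, -w\}$ and $\{-v, w\}$. The four vertices $v, w, -v, -w$ are pairwise distinct (because $v \neq \pm v$, and $v \neq \pm w$ since $\{v, \pm w\}$ are genuine non-loop edges), so these four edges close up into the required $4$-cycle $v - w - (-v) - (-w) - v$. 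The only subtle point is really just the bookkeeping of the free antipodal action; once that is in place, the inequality $12 > \binom{5}{2}$ does all the work.
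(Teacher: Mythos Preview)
Your proof is correct, and it reaches the same conclusion via a genuinely different (and arguably cleaner) route than the paper's.

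The paper does the same initial Euler count to get $\abs{\tilde E}=24$, but then argues locally: since $2\abs{\tilde E}=48>40=4\abs{\tilde V}$, some vertex $v$ has degree at least $5$; as $v$ cannot be joined to $-v$, pigeonhole among the four remaining antipodal pairs forces $v$ to be adjacent to both $w$ and $-w$ for some $w$, and central symmetry completes the $4$-cycle. Your argument instead passes globally to the quotient by the free antipodal action, obtaining a loopless multigraph on $5$ vertices with $12>\binom{5}{2}$ edges, and reads off the multi-edge directly. Both arguments rest on the same arithmetic, but yours trades the ``find a high-degree vertex, then pigeonhole'' step for a single inequality in the quotient; in exchange, the paper's version avoids having to verify that the action is free on edges and that the original edge set is simple (which you use implicitly when concluding that the two lifts of the multi-edge must be $\{v,w\}$ and $\{v,-w\}$ rather than two parallel copies of $\{v,w\}$).
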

\begin{proof}
  Let $\tilde{V},\tilde{E},\tilde{F}$ be the sets of vertices, edges and triangles of the
  triangulation. By Euler's polyhedron formula
  we have
  \begin{align*}
    \abs{\tilde{V}}-\abs{\tilde{E}}+\abs{\tilde{F}}=2.
  \end{align*}
  Since every triangle has three sides and
  every edge of the triangulation is contained in exactly two triangles, we have
  $2\abs{\tilde{E}}=3\abs{\tilde{F}}$. Combining these equations gives
  us $\abs{\tilde{V}}-\frac{1}{3}\abs{\tilde{E}}=2$,
  i.e. $30=3\abs{\tilde{V}}=\abs{\tilde{E}}+6$
  and so $\abs{\tilde{E}}=24$.
  If all vertices have degree at most $4$, then we would have
  $40=4\abs{\tilde{V}} \geq 2\abs{\tilde{E}}=48$.
  Hence there is a vertex $v$ of degree at least $5$.
  The vertex $v$ can not be connected by an edge to $-v$ in the triangulation,
  since that edge would pass through the origin, which cannot lie on the boundary of $\calC$
  There are four remaining equivalence classes of vertices,
  if we identify vertices which are opposite each other.
  By the pigeonhole principle, there must be a vertex $w \neq v$ such
  that both $w$ and $-w$ are neighbors of $v$.
  Since $\calC$ is invariant under $x \mapsto -x$,
  $w$ and $-w$ are also connected to $-v$,
  hence $v,w,-v,-w$ form a $4$-cycle.
\end{proof}
\begin{figure}
  \begin{center}
    \includegraphics[scale=0.8]{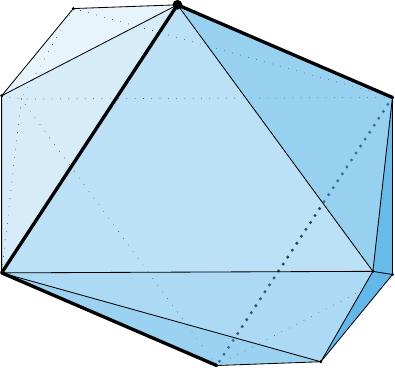}
    \caption{A centrally symmetric triangulated convex polytope. A vertex with degree 5 and a
      corresponding
      4-cycle are highlighted}
    \label{fig:triang}
    \end{center}
\end{figure}

With this preparation it is quite easy to prove:
\tenvertices*
\begin{proof}
  By adding edges and vertices on the faces of $\calC$, we triangulate
  $\calC$ such that the triangulation is centrally symmetric.
  By \Cref{lem:10-vertices-combinatorial} there are vertices $v$,$w$
  such that 
  $v,w,-v,-w$ form a $4$-cycle in the triangulation, see
  \Cref{fig:triang}.
  In particular these four vertices form a plane quadrilateral
  whose edges lie on the boundary of $\calC$.
  Let $U$ be the subspace spanned by $v$ and $w$.
  $\calC$ intersects $U$ precisely
  in the quadrilateral defined by $v,w,-v,-w$.
  Let $F$ and $G$ be supporting hyperplanes of $\calC$ containing
  the edges $(v,w)$ and $(v,-w)$ respectively.
  Let $F'$ and $G'$ be the linear subspaces
  parallel to $F$ and $G$, respectively.
  Set $H:=F' \cap G'$. Let $P$ be the unique projection
  with image $U$ and kernel $H$.
  Then the image of $\calC$ under $P$ equals
  the quadrilateral $\calC \cap U$, which is contained in $\calC$,
  hence $\norm{P}_\calC=1$. 
\end{proof}

\section*{Acknowledgments}
I would like to thank Hella Epperlein for pointing out the degree argument that
significantly shortened the proof of \Cref{thm:10-vertices-geometric}.

\appendix
\crefalias{section}{appendix}
\section{General Implementation Details}
\label{sec:impl-general}
We implemented our algorithms to produce the certificates described
in \Cref{sec:sum-of-squares} and \Cref{sec:eps-dense-farkas} in
\name{Julia}. The code can be found at
\url{https://github.com/JeremiasE/ShadyPolytopes.jl}.
All references to files refer to this repository.

The following packages were essential:
\begin{enumerate}
\item
  \texttt{SumOfSquares.jl} -- Sum-of-squares decomposition
  \cite{legat2017sos,weisser2019polynomial}.
\item \texttt{Clarabel.jl} -- interior point solver for convex
  optimization problems \cite{goulartClarabelInteriorpointSolver2024},
  used for solving the semidefinite optimization problems
  produced by \texttt{SumOfSquares.jl}
  \item
  \texttt{HiGHs.jl} -- wrapper around the linear solver HiGHs
  \cite{huangfuParallelizingDualRevised2018}.
  \item \texttt{Singular.jl} -- wrapper around the computer algebra system
  Singular used for Gröbner basis computations
  \cite{wolfram2024Singular}.
\item\texttt{Polyhedra.jl} -- For general polyhedral computations
  \cite{legat2023polyhedral}.
\end{enumerate}

The computation of the constants
$C_\calI$ and $C_\calJ$ is done in the \texttt{Jupyter} notebook
\begin{lstlisting}
  notebooks/Analyze_perturbed_icosahedron.ipynb
\end{lstlisting}
The simple test for shadiness of $\calI$ in
\Cref{ex:simple-shadiness-test}
is performed in the \texttt{Jupyter} notebook
\begin{lstlisting}
  notebooks/Simple_shadiness_test.ipynb
\end{lstlisting}

The code for all the computations is contained
in the \texttt{src} subfolder.
The commands to produce the certificates are contained in the 
\texttt{script} subfolder and the precomputed certificates
are located in the \texttt{data} subfolder.

All computations were done on an Intel Core i7-4770K CPU (3.50GHz, 4 cores)
running Fedora 41 and Julia 1.11.4.

\section{Implementation Details for the Sums-of-Squares Approach}
\label{sec:impl-sos}

In this section we give an overview of the implementation
generating the sums-of-squares certificate discussed
in \Cref{sec:sum-of-squares}
using the notation from that section.
The certificate is calculated by calling
\begin{lstlisting}
  julia scripts/generate_sos_certificate.jl
\end{lstlisting}
in the \texttt{scripts} subfolder.
This produces a Julia file \texttt{sos\_certificate.jl} in the
\texttt{data} subfolder, which contains the weighted
sum-of-squares decomposition, as follows:
We start by obtaining a floating-point weighted sum-of-squares
decomposition using the \texttt{SumOfSquares.jl} package.
As the SDP-solver we use \texttt{Clarabel}.
\texttt{SumOfSquares.jl} takes the equality constraints $f_i(x)=0, i =\{1,\dots,s\}$ into account
by computing a Gröbner basis $f'_1,\dots,f'_{s'}$ and reducing
all other constraints appropriately.
This results in $125595$ scalar variables in the semidefinite optimization problem.
As the upper bound for the squares of entries of $P$, see
\eqref{eq:bound-projection-entries}
and \eqref{eq:spectral-norm-bound-I},
we get $\Omega = 1.01 \cdot C_\calC = 1.01 \cdot \sqrt{\frac{137}{27}}
\approx 2.23$.
We use \texttt{SumOfSquares.jl}
to find a weighted sum-of-squares decomposition of the
constant polynomial $-1$
where the maximal degree of polynomials is $5$.
For our inequality constraints defined by linear polynomials $g_j$,
this leads to sums of squares of quadratic polynomials
in the coefficient polynomials $\tilde{q}_j$.
Similarly, for our quadratic inequality constraints $g_j$
as well as our quadratic equality constraints $f_i$
we only get sums of squares of linear polynomials in the coefficients
$\tilde{q}_j$ and $\tilde{p}_i$.

This leads to $r=3$ in the calculation of the projected Gram matrix
and a value of $\Delta = \frac{2894536936604222153}{164025000000000}$.

The polynomials $\tilde{q}_j, j \in \{1,\dots,t\}$
as well as the Gram matrix $\tilde{Q}$ can be obtained directly
via the methods implemented in \texttt{SumOfSquares.jl}.
Due to the Gröbner basis technique mentioned above, the floating point
polynomials $\tilde{p}_j, j \in \{1,\dots,s\}$ and
their rounded version $p_j, j \in \{1,\dots,s\}$ are not directly accessible.
To obtain them, we first compute
\[h := -1 - [x]_r^\top \tilde{Q} [x]_r - \sum_{j=1}^t \tilde{q}_j
  g_j.\]
By using the Euclidean algorithm and the Gröbner basis $f_1',\dots,f'_{s'}$
we can obtain floating point polynomials
\begin{align*}
  \tilde{p}'_1,\dots, \tilde{p}'_{s'}
\end{align*}
such that
\begin{align*}
  h = \sum_{i=1}^{s'} \tilde{p}'_i f'_i
\end{align*}

Again we round these polynomials to $p'_1,\dots,p'_{s'} \in \bbQ[x]$.
We use \texttt{Singular.jl} to compute polynomials $e_{i,j} \in \bbQ[x],\; i=1,\dots,s',\;
j=1,\dots,s$
\begin{align*}
  f_i' = \sum_{j=1}^{s} e_{i,j} f_{j}
\end{align*}
Finally we obtain the polynomials $p_j \in \bbQ[x]$
as
\begin{align*}
  p_j:= \sum_{i=1}^{s'} e_{i,j} p'_i
\end{align*}
Next we project the computed Gram matrix $\tilde{Q}$
as described in \Cref{sec:sum-of-squares}
and obtain a rational matrix $Q$.

We use \texttt{Julia}'s \texttt{bunchkaufman} method to compute
a rational LDL-decomposition of $Q + \frac{1}{2\Delta}$
to obtain the rational matrices $L$ and $D$ satisfying
$Q + \frac{1}{2\Delta} = LDL^\top$
where $D$ is diagonal.

The proof certificate is verified by calling:
\begin{lstlisting}
  julia scripts/check_sos_certificate.jl
\end{lstlisting}

It takes roughly 20 minutes to generate the certificate 
and 100 seconds to check it.

\section{Implementation Details for the Linear Duality Approach}
\label{sec:impl-farkas}
The proof certificate for \Cref{thm:lower-bound}
using the approach described in \Cref{sec:eps-dense-farkas}
can be generated by calling

\begin{lstlisting}
  julia --threads 3 scripts/generate_farkas_certificate_john_position.jl
\end{lstlisting}
  
The output consists of three files in the \texttt{data} subfolder
\begin{align*}
  \text{\texttt{farkas-certificates-opt-ico-jp-1400-84\_83-i.csv.gz}},\,i=1,\dots,3
\end{align*}
corresponding to the facet $F_i=\{x \in [-1,1]^3 \setsep x_i=1\}$ of
$B_\infty$,
$\alpha^*=\frac{84}{83}>1.01$ and $n=1400$.
These are CSV files compressed with GZIP; when uncompressed, each
file has a size of about 1.1GiB.

Each line in the $i$-th file corresponds to a point $w \in F_i$
and has the form
\[(w_1,w_2,w_3,k_1,k_2,k_3,y_{k_1},y_{k_2},y_{k_3}),\]
e.g.
\begin{lstlisting}
  1//1;-39//40;-164//175;40;57;115;
  43084159464618720881//5777554117512961187;
  14135303314411071435//5777554117512961187;
  3689530486357540849//5777554117512961187
\end{lstlisting}
This corresponds to
$w=(1,\frac{-39}{40},-\frac{164}{175})$
and $y \in \bbR^{140}$.
\begin{align*}
  y_i=\begin{cases}
    \frac{43084159464618720881}{5777554117512961187} &\text{if } i=40 \\
    \frac{14135303314411071435}{5777554117512961187} &\text{if } i=57 \\
    \frac{3689530486357540849}{5777554117512961187} &\text{if }  i=115\\
    0 &\text{otherwise}
  \end{cases}
\end{align*}
This $y$ satisfies
$A_{w,\alpha^*}^\top y=w$, where
$A_{w,\alpha^*}$
is the matrix with rows $h^\top v w^\top - w^\top v h^\top - \alpha^* w^\top$ for
$v \in V', h \in H$.

The vectors $y$ are calculated as follows.
We use \texttt{HiGHS} to find a non-negative solution $y$ of
$A_{w,\alpha^*}^\top y = w$. This will only be a floating point solution.
To obtain a rational solution, we first observe
that, by Carathéodory's theorem,
$w$ is in the convex cone generated by the rows of $A_{w,\alpha^*}$
iff $w$ is the non-negative linear combination of $d$
rows of $A_{w,\alpha^*}$. In other words, if there is a non-negative
solution $y$
to $A_{w,\alpha^*}^\top y = w$, then there also is a
non-negative solution in which
at most $d$ entries are non-zero. It turns out that
\texttt{HiGHS} actually returns these sparse solutions.
We can then also find a non-negative rational solution
with the same support
by simply solving the corresponding
$3 \times 3$ rational linear equation system.

The certificate can be checked with the following call:
\begin{lstlisting}
  julia --threads 3 scripts/check_farkas_certificate_john_position.jl
\end{lstlisting}
This script checks for each line in the three certificate
files that the corresponding $y$ is positive
and solves $A_{w, \alpha^*}^\top y = w$.

It takes approximately 20 hours to generate the proof certificate
and 10 hours to check it. This approach is significantly slower
than the sum-of-squares approach, but has the advantage that
both the generation as well as the validation of the certificate
is trivially parallelizable. We use this fact
to generate and validate the three certificate files
in parallel.
\printbibliography

@article{kobosUniformLowerBound2023,
  title = {A {{Uniform Lower Bound}} on the {{Norms}} of {{Hyperplane Projections}} of {{Spherical Polytopes}}},
  author = {Kobos, Tomasz},
  date = {2023-07-01},
  journaltitle = {Discrete \& Computational Geometry},
  shortjournal = {Discrete Comput Geom},
  volume = {70},
  number = {1},
  pages = {279--296},
  issn = {1432-0444},
  doi = {10.1007/s00454-023-00506-z},
}

@article{kobosHYPERPLANESFINITEDIMENSIONALNORMED2015,
  title = {Hyperplanes of finite-dimensional normed spaces with the maximal relative projection constant. },
  author = {Kobos, T.},
  date = {2015},
  journaltitle = {Bulletin of the Australian Mathematical Society},
  volume = {91},
  number = {3},
  pages = {447--463},
  issn = {0004-9727},
  doi = {10.1017/S0004972715000088}
}

@book{singerBasesBanachSpaces1970,
  title = {Bases in {{Banach}} Spaces. {{I}}},
  author = {Singer, Ivan},
  date = {1970},
  series = {Grundlehren {{Math}}. {{Wiss}}.},
  volume = {154},
  publisher = {Springer-Verlag},
  location = {Berlin-Heidelberg-New York},
  issn = {0072-7830}
}

@article{bosznayNormsProjections1986,
  title = {On Norms of Projections},
  author = {Bosznay, A. P. and Garay, B. M.},
  date = {1986},
  journaltitle = {Acta Scientiarum Mathematicarum},
  shortjournal = {Acta Sci. Math.},
  volume = {50},
  pages = {87--92},
  issn = {0001-6969}
}

@article{huangfuParallelizingDualRevised2018,
  title = {Parallelizing the Dual Revised Simplex Method},
  author = {Huangfu, Q. and Hall, J. A. J.},
  date = {2018-03-01},
  journaltitle = {Mathematical Programming Computation},
  shortjournal = {Math. Prog. Comp.},
  volume = {10},
  number = {1},
  pages = {119--142},
  issn = {1867-2957},
  doi = {10.1007/s12532-017-0130-5}
}

@online{goulartClarabelInteriorpointSolver2024,
  title = {Clarabel: {{An}} Interior-Point Solver for Conic Programs with Quadratic Objectives},
  shorttitle = {Clarabel},
  author = {Goulart, Paul J. and Chen, Yuwen},
  date = {2024-05-21},
  eprint = {2405.12762},
  eprinttype = {arXiv},
  eprintclass = {math},
  doi = {10.48550/arXiv.2405.12762},
}

@Conference{weisser2019polynomial,
  author    = {Weisser, Tillmann and Legat, Benoît and Coey, Chris and Kapelevich, Lea and Vielma, Juan Pablo},
  title     = {Polynomial and Moment Optimization in Julia and JuMP},
  booktitle = {JuliaCon},
  year      = {2019},
  url       = {https://pretalx.com/juliacon2019/talk/QZBKAU/},
}

@Conference{legat2017sos,
  author    = {Legat, Benoît and Coey, Chris and Deits, Robin and Huchette, Joey and Perry, Amelia},
  title     = {{Sum-of-squares optimization in Julia}},
  booktitle = {The First Annual JuMP-dev Workshop},
  year      = {2017},
}

@misc {wolfram2024Singular,
 title = {{\sc Singular} {4-4-0} --- {A} computer algebra system for polynomial computations},
 author = {Decker, Wolfram and Greuel, Gert-Martin and Pfister, Gerhard and Schönemann, Hans},
 year = {2024},
 howpublished = {\url{http://www.singular.uni-kl.de}},
}

@article{putinarPositivePolynomialsCompact1993,
  title = {Positive {{Polynomials}} on {{Compact Semi-algebraic Sets}}},
  author = {Putinar, Mihai},
  date = {1993},
  journaltitle = {Indiana University Mathematics Journal},
  volume = {42},
  number = {3},
  pages = {969--984},
  publisher = {Indiana University Mathematics Department},
  doi={10.1512/iumj.1993.42.42045}
}

@article{peyrlComputingSumSquares2008a,
  title = {Computing Sum of Squares Decompositions with Rational Coefficients},
  author = {Peyrl, Helfried and Parrilo, Pablo A.},
  date = {2008-12-17},
  journaltitle = {Theoretical Computer Science},
  shortjournal = {Theoretical Computer Science},
  series = {Symbolic-{{Numerical Computations}}},
  volume = {409},
  number = {2},
  pages = {269--281},
  issn = {0304-3975},
  doi = {10.1016/j.tcs.2008.09.025}
}

@article{deregowskaSimpleProofGrunbaum2023,
  title = {A Simple Proof of the {{Gr{\"u}nbaum}} Conjecture},
  author = {Deregowska, Beata and Lewandowska, Barbara},
  year = {2023},
  month = jul,
  journal = {Journal of Functional Analysis},
  volume = {285},
  number = {2},
  primaryclass = {math},
  pages = {109950},
  doi = {10.1016/j.jfa.2023.109950}
}

@misc{epperlein2025auerbach,
  title = {Auerbach Bases, Projection Constants, and the Joint Spectral Radius of Principal Submatrices},
  author = {Epperlein, Jeremias and Wirth, Fabian},
  year = {2025},
  month = apr,
  number = {arXiv:2504.17505},
  eprint = {2504.17505},
  primaryclass = {math},
  publisher = {arXiv},
  doi = {10.48550/arXiv.2504.17505},
  archiveprefix = {arXiv}
}

@article{grunbaumProjectionConstants1960,
  title = {Projection {{Constants}}},
  author = {Gr{\"u}nbaum, B.},
  year = {1960},
  journal = {Transactions of the American Mathematical Society},
  volume = {95},
  number = {3},
  pages = {451--465},
  publisher = {American Mathematical Society},
  issn = {0002-9947},
  doi = {10.2307/1993567},
  }

@article{koenigFiniteDimensionalProjection1983,
  title = {Finite Dimensional Projection Constants},
  author = {Koenig, H. and Lewis, D. R. and Lin, P. K.},
  year = {1983},
  journal = {Studia Mathematica},
  volume = {75},
  pages = {341--358},
  issn = {0039-3223},
  doi = {10.4064/sm-75-3-341-358}
}

@article{konigProjectionConstantsSymmetric1999,
  title = {Projection Constants of Symmetric Spaces and Variants of {{Khintchine}}'s Inequality},
  author = {K{\"o}nig, Hermann and Sch{\"u}tt, Carsten and {Tomczak-Jaegermann}, Nicole},
  year = {1999},
  month = jun,
  journal = {Journal f{\"u}r die reine und angewandte Mathematik (Crelles Journal)},
  volume = {1999},
  number = {511},
  pages = {1--42},
  issn = {0075-4102, 1435-5345},
  doi = {10.1515/crll.1999.511.1},
}

@article{kadetsFunctionalsCompactMinkovskii1971,
  title = {Some Functionals over a Compact {{Minkovskii}} Space},
  author = {Kadets, M. I. and Snobar, M. G.},
  year = {1971},
  month = oct,
  journal = {Mathematical notes of the Academy of Sciences of the USSR},
  volume = {10},
  number = {4},
  pages = {694--696},
  issn = {1573-8876},
  doi = {10.1007/BF01106467},
}

@article{bohnenblustConvexRegionsProjections1938,
  title = {Convex {{Regions}} and {{Projections}} in {{Minkowski Spaces}}},
  author = {Bohnenblust, F.},
  year = {1938},
  journal = {Annals of Mathematics},
  volume = {39},
  number = {2},
  pages = {301--308},
  publisher = {Annals of Mathematics},
  issn = {0003-486X},
  doi = {10.2307/1968786},
}

@article{kobosUniformEstimateRelative2018,
  title = {A Uniform Estimate of the Relative Projection Constant},
  author = {Kobos, Tomasz},
  year = {2018},
  month = jan,
  journal = {Journal of Approximation Theory},
  volume = {225},
  pages = {58--75},
  issn = {0021-9045},
  doi = {10.1016/j.jat.2017.09.006},
}

@book{tomczak-jaegermannBanachMazurDistancesFinitedimensional1989,
 author = {Tomczak-Jaegermann, Nicole},
 title = {Banach-Mazur distances and finite-dimensional operator ideals},
 fseries = {Pitman Monographs and Surveys in Pure and Applied Mathematics},
 series = {Pitman Monogr. Surv. Pure Appl. Math.},
 issn = {0269-3666},
 volume = {38},
 year = {1989},
 publisher = {Harlow: Longman Scientific \& Technical; New York: John Wiley \& Sons, Inc.},
 language = {English},
}

@book{boydConvexOptimization2004,
  title = {{Convex Optimization}},
  author = {Boyd, Stephen and Vandenberghe, Lieven},
  year = {2004},
  month = mar,
  publisher = {Cambridge University Press},
  address = {Cambridge New York Melbourne New Delhi Singapore},
}

@book{blekhermanSemidefiniteOptimizationConvex2012,
  title = {Semidefinite {{Optimization}} and {{Convex Algebraic Geometry}}},
  editor = {Blekherman, Grigoriy and Parrilo, Pablo A. and Thomas, Rekha R.},
  year = {2012},
  month = dec,
  publisher = {{Society for Industrial and Applied Mathematics}},
  address = {Philadelphia, PA},
  doi = {10.1137/1.9781611972290}
}

@book{marshallPositivePolynomialsSums2008,
  title = {Positive Polynomials and Sums of Squares},
  author = {Marshall, Murray},
  year = {2008},
  series = {Mathematical Surveys and Monographs},
  number = {v. 146},
  publisher = {American Mathematical Society},
  address = {Providence, R.I}
}

@book{henrionMomentSOSHierarchyLectures2021,
  title = {The Moment-{{SOS}} Hierarchy: Lectures in Probability, Statistics, Computational Geometry, Control and Nonlinear {{PDEs}}},
  shorttitle = {The Moment-{{SOS}} Hierarchy},
  author = {Henrion, Didier and Korda, Milan and Lasserre, Jean-Bernard},
  year = {2021},
  series = {Series on Optimization and Its Applications},
  number = {vol. 4},
  publisher = {World Scientific},
  address = {New Jersey},
  doi = {10.1142/q0252},
}

@book{theobaldRealAlgebraicGeometry2024,
  title = {Real {{Algebraic Geometry}} and {{Optimization}}},
  author = {Theobald, Thorsten},
  year = {2024},
  series = {Graduate {{Studies}} in {{Mathematics}}},
  volume = {241},
  publisher = {American Mathematical Society},
  address = {Providence, Rhode Island},
  doi = {10.1090/gsm/241},
}

@article{kalubaPropertyPropertyT2021,
  title = {On property (T) for $\mathrm{Aut}(F_n)$ and $\mathrm{SL}_n(\mathbb{Z})$},
  author = {Kaluba, Marek and Kielak, Dawid and Nowak, Piotr W.},
  year = {2021},
  month = mar,
  journal = {Annals of Mathematics},
  volume = {193},
  number = {2},
  pages = {539--562},
  publisher = {Department of Mathematics of Princeton University},
  doi = {10.4007/annals.2021.193.2.3},
}

@article{kellnerSumSquaresCertificates2016,
  title = {Sum of Squares Certificates for Containment of $\mathcal{H}$-Polytopes in $\mathcal{V}$-Polytopes},
  author = {Kellner, Kai and Theobald, Thorsten},
  year = {2016},
  month = jan,
  journal = {SIAM Journal on Discrete Mathematics},
  volume = {30},
  number = {2},
  pages = {763--776},
  publisher = {{Society for Industrial and Applied Mathematics}},
  doi = {10.1137/15M1013341},
}

@article{magronExactReznickHilbertArtin2021,
  title = {On Exact {{Reznick}}, {{Hilbert-Artin}} and {{Putinar}}'s Representations},
  author = {Magron, Victor and Safey El Din, Mohab},
  year = {2021},
  month = nov,
  journal = {Journal of Symbolic Computation},
  volume = {107},
  pages = {221--250},
  issn = {0747-7171},
  doi = {10.1016/j.jsc.2021.03.005},
}

@article{foucartComputationMinimalProjections2016,
  title = {Computation of {{Minimal Projections}} and {{Extensions}}},
  author = {Foucart, Simon},
  year = {2016},
  month = feb,
  journal = {Numerical Functional Analysis and Optimization},
  volume = {37},
  number = {2},
  pages = {159--185},
  publisher = {Taylor \& Francis},
  issn = {0163-0563},
  doi = {10.1080/01630563.2015.1091014},
}

@article{davisDualCertificatesEfficient2022,
  title = {Dual {{Certificates}} and {{Efficient Rational Sum-of-Squares Decompositions}} for {{Polynomial Optimization}} over {{Compact Sets}}},
  author = {Davis, Maria M. and Papp, D{\'a}vid},
  year = {2022},
  month = dec,
  journal = {SIAM Journal on Optimization},
  volume = {32},
  number = {4},
  pages = {2461--2492},
  publisher = {{Society for Industrial and Applied Mathematics}},
  doi = {10.1137/21M1422574},
}

@article{davisRationalDualCertificates2024,
  title = {Rational Dual Certificates for Weighted Sums-of-Squares Polynomials with Boundable Bit Size},
  author = {Davis, Maria M. and Papp, D{\'a}vid},
  year = {2024},
  month = mar,
  journal = {Journal of Symbolic Computation},
  volume = {121},
  pages = {102254},
  issn = {07477171},
  doi = {10.1016/j.jsc.2023.102254},
}

@book{grunbaumConvexPolytopes2003,
  title = {Convex {{Polytopes}}},
  author = {Gr{\"u}nbaum, Branko},
  editor = {Kaibel, Volker and Klee, Victor and Ziegler, G{\"u}nter M.},
  year = {2003},
  series = {Graduate {{Texts}} in {{Mathematics}}},
  edition = {Second Edition},
  number = {221},
  publisher = {Springer},
  address = {New York, NY},
  doi = {10.1007/978-1-4613-0019-9},
}

@Conference{legat2023polyhedral,
  author    = {Legat, Benoît},
  title     = {Polyhedral Computation},
  month     = jul,
  booktitle = {JuliaCon},
  year      = {2023},
  url       = {https://pretalx.com/juliacon2023/talk/JP3SPX/},
}

\end{document}